\providecommand{\U}[1]{\protect\rule{.1in}{.1in}}
\providecommand{\U}[1]{\protect\rule{.1in}{.1in}}
\providecommand{\U}[1]{\protect\rule{.1in}{.1in}}
\providecommand{\U}[1]{\protect\rule{.1in}{.1in}}
\newtheorem{theorem}{Theorem}[section]
\newtheorem{corollary}[theorem]{Corollary}
\newtheorem{lemma}{Lemma}[section]
\theoremstyle{definition}
\newtheorem{definition}{Definition}[section]
\newtheorem{remark}{Remark}[section]
\numberwithin{equation}{section}
\theoremstyle{example}
\numberwithin{equation}{section}
\begin{document}

\title{Convergence analysis of an inexact inertial Krasnoselskii-Mann algorithm with applications}

\author{ Fuying Cui$^{a}$, Yang Yang$^{a}$,  Yuchao Tang$^{a}$\footnote{Corresponding author: Yuchao Tang, Email address: hhaaoo1331@163.com}, Chuanxi Zhu$^{a}$
\\
{\small ${^a}$ Department of Mathematics, Nanchang University,  Nanchang 330031, P.R. China }}

 \date{}

\maketitle

{}\textbf{Abstract.}\
The classical Krasnoselskii-Mann iteration is broadly used for approximating fixed points of nonexpansive operators. To accelerate the convergence of the Krasnoselskii-Mann iteration, the inertial methods were received much attention in recent years. In this paper, we propose an inexact inertial Krasnoselskii-Mann algorithm. In comparison with the original inertial Krasnoselskii-Mann algorithm, our algorithm allows error for updating the iterative sequence, which makes it more flexible and useful in practice. We establish weak convergence results for the proposed algorithm under different conditions on parameters and error terms. Furthermore, we provide a nonasymptotic convergence rate for the proposed algorithm. As applications, we propose and study inexact inertial proximal point algorithm and inexact inertial forward-backward splitting algorithm for solving monotone inclusion problems and the corresponding convex minimization problems.

\textbf{Key words}: Nonexpansive operator; Krasnoselskii-Mann iteration; inertial Krasnoselskii-Mann iteration; inexact Krasnoselskii-Mann iteration.

\textbf{AMS Subject Classification}:  47H09; 47H05; 90C25.

\section{Introduction}
\label{sec-introduction}

Throughout the paper, let $H$ be a real Hilbert space, which equipped with inner product  $\langle\cdot,\cdot\rangle$ and norm $\|\cdot\|$.
 We denote by $Fix(T)$ that the fixed points set of an operator $T$, more precisely,
$Fix(T)=\{x\in H : Tx=x\}$.

Many problems in pure and applied mathematics can be formulated as fixed point problems. The fixed point problem of nonexpansive operators was received much attention in recent years. See for example \cite{Chang2002book,xuhk2006,berinde2007,chidumebook2009,cegielskibook,Tang2013MC,Tang2014MMAS} and references therein. Many efficient iterative algorithms for solving composite convex optimization problems include the primal-dual fixed point proximity algorithm \cite{chenpj2013,TangJCM2019}, the Davis-Yin's three-operator splitting algorithm \cite{davis2015,Zong2018} and the primal-dual hybrid gradient algorithm and its variants \cite{chambolle2016AN,tang2016-1} that can be formulated as a fixed point problem of nonexpansive operators.

The classical methods for solving the fixed point of nonexpansive operators is the Krasnoselskii-Mann (KM) iteration scheme, which is defined by,
\begin{equation}\label{KM-iteration}
x^{k+1}= (1-\lambda_{k})x^{k}+\lambda_{k}Tx^{k},
\end{equation}
where $T$ is a nonexpansive operator. The convergence of the KM iteration (\ref{KM-iteration}) is well studied in Hilbert spaces. In fact, the sequence $\{x^{k}\}$ generated by the KM iteration (\ref{KM-iteration}) converges weakly to a fixed point of $T$, under the condition $\{\lambda_{k}\}\subset [0,1]$ and $\sum_{k=0}^{+\infty}\lambda_{k}(1-\lambda_{k})=+\infty$. We refer interested readers to Theorem 5.15 of \cite{bauschkebook2017} for detail of proof. Recently, several authors provided the convergence rate analysis for the KM algorithm (\ref{KM-iteration}) in the sense of the difference between $x^k$ and $Tx^k$. See, for example \cite{Cominetti2014,Matsushita2017}.

In recent years, inertial methods have become more and more popular. Various inertial  algorithms were studied, see for example \cite{Alvarez2001,Moudafi2003,Dong2016Optim,Attouch2018,Cui2019} and references therein. The inertial method is also called the heavy ball method, which is based on a discretization of a second-order dissipative dynamic system. Maing\'{e}, in \cite{Mainge2008JCAM}, proposed the following inertial Krasnoselskii-Mann (iKM) algorithm,
\begin{equation}\label{inertial KM}
\left\{
\begin{aligned}
\omega^{k} &=x^{k}+\alpha_{k}(x^{k}-x^{k-1}) \\
x^{k+1} &=(1-\lambda_{k})\omega^{k}+\lambda_{k}T\omega^{k}.
\end{aligned}
\right.
\end{equation}
The convergence of (\ref{inertial KM}) is proved under the condition that:
\begin{equation}\label{inertial-parameter-condition}
\begin{aligned}
& \alpha_k \in [0, \alpha), \textrm{ where } \alpha\in [0,1), \textrm{ and }
& \sum_{k=0}^{+\infty} \alpha_k \| x^k - x^{k-1} \|^2 < +\infty,
\end{aligned}
\end{equation}
and
\begin{equation}
0 < \inf \lambda_k \leq \sup \lambda_k < 1.
\end{equation}
The difference between the KM iteration  (\ref{KM-iteration}) and the iKM iteration (\ref{inertial KM}) is that the latter used a combination of the iterative sequences $x^{k}$ and  $x^{k-1}$ to obtain the new iterative sequence. Many numerical experiment results confirm that the iKM iteration converges faster than the KM iteration without inertial.  In \cite{Bot2015AMC}, Bo\c{t} et al. also studied the convergence of the iKM iteration (\ref{inertial KM}). But they removed   the second condition in (\ref{inertial-parameter-condition}). As a supplement, they require a strict condition on the parameters of $\alpha_k$ and $\lambda_k$, which is given by
\begin{equation}
\delta > \frac{\alpha^{2}(1+\alpha)+\alpha \sigma}{1-\alpha^{2}} \quad \textrm{ and } \quad 0<\lambda \leq \lambda_{k}\leq \frac{\delta-\alpha[\alpha (1+\alpha)+\alpha \delta+\sigma]}{\delta[1+\alpha (1+\alpha)+\alpha \delta+\sigma]},
\end{equation}
where $\lambda, \sigma > 0 $ and $\{\alpha_k\}$ is nondecreasing with $0\leq \alpha_{k} \leq \alpha <1$.
As an application, they proposed an inertial Douglas-Rachford (iDR) algorithm. Further, an inertial alternating direction method of multipliers (ADMM) was developed in \cite{Bot2016MTA}. Some recent generalization of the iKM (\ref{inertial KM}) can be found in \cite{JohnstoneCOA2017, Dong2018Book, DongJGO2019,WuzmCOA2019}.

On the other hand, to incorporate numerical errors in the computation of the iterative sequence, the KM iteration (\ref{KM-iteration}) was generalized to the inexact case. More precisely, the inexact KM iteration is defined by
\begin{equation}\label{inexact-KM}
x^{k+1}=(1-\lambda_{k})x^{k}+\lambda_{k}(Tx^{k}+e_{k}),
\end{equation}
where $\lambda_{k}\in (0,1)$ and $e_{k}$ is an error term. It is obvious that if $e_k =0$ in (\ref{inexact-KM}), then it reduces to the classical KM iteration (\ref{KM-iteration}). The inexact KM iteration (\ref{inexact-KM}) has wide application in the study of operator splitting algorithms.
 The convergence of the inexact proximal point algorithm studied in \cite{RockafellarSIAM1976} could be easily obtained from the inexact KM iteration (\ref{inexact-KM}). Besides, the inexact forward-backward splitting algorithm \cite{combettes2005}, the inexact Douglas-Rachford algorithm \cite{combettes2007} and the inexact three-operator splitting algorithm \cite{Zong2018} could also be derived from the convergence analysis of the inexact KM iteration scheme (\ref{inexact-KM}. Besides, Combettes \cite{Combettes2004Optimization} investigated the convergence of the inexact KM (\ref{inexact-KM}) involves nonstationary compositions of perturbed averaged operators. As a direct application, a nonstationary forward-backward splitting algorithm with errors for solving monotone inclusion of the sum of two maximally monotone operators with one of them is inverse strongly monotone was obtained. See also \cite{Combettes2015JMAA,Combettes2017,CuiJIA2019}. Following the convergence rate analysis of the KM algorithm (\ref{KM-iteration}), Liang et al. \cite{Liang2016F} studied the convergence rate of the inexact KM algorithm (\ref{inexact-KM}).
 It is worth mentioning that the iterative sequence generated by the KM iteration (\ref{KM-iteration}) or the inexact KM iteration (\ref{inexact-KM}) is Fej\'{e}r-monotone or quasi-Fej\'{e}r-monotone to the fixed points set of $T$, while the iKM iteration (\ref{inertial KM}) doesn't have this property.

The purpose of this paper is to combine the inertial method with the inexact method. We aim to propose an inexact inertial Krasnoselskii-Mann algorithm (see (\ref{two-step algorithm1})). Further, we investigate the asymptotic behavior of the proposed algorithm for finding fixed points of nonexpansive operators under different conditions on parameters and error terms. Since the iterative sequence generated by the inertial algorithm doesn't have the Fej\'{e}r-monotone property. To overcome this difficulty,  we develop a new technique to prove the weak convergence of the proposed algorithm. We also study the convergence rate of the proposed algorithm in the spirit of the recent work of Shehu \cite{Shehu2018NFAO}. As applications, we obtain an inexact inertial proximal point algorithm and an inexact inertial forward-backward splitting algorithm for solving monotone inclusion problems and convex optimization problems. These iteration algorithms are completely new and haven't appeared in the literature before.

The rest of this paper is organized as follows. Section 2, we recall some definitions and lemmas on nonexpansive operators and monotone operator theory. Section 3, we propose an inexact inertial Krasnoselskii-Mann algorithm and analyze its convergence property. Section 4, we present several applications on monotone inclusion problems with the proposed algorithm. Finally, we give some conclusions and future works.

\section{Preliminaries}
\label{sec:pre}

In this section, we recall some  definitions  and  preliminary results on nonexpansive operators and monotone operators theory in Hilbert space. Let $H$ be a real Hilbert space with inner product $\langle \cdot , \cdot \rangle$ and norm $\|\cdot\|$. We define $x^k \rightharpoonup x$ denotes the sequence $\{x^k\}$ converges weakly to $x$ and $x^k \rightarrow x$ to indicate that the sequence $\{x^k\}$ converges strongly to $x$. Further, $\omega_{w}(x^k)$ denotes every sequential weak cluster point of $\{x^k\}$.

\begin{definition}(\cite{bauschkebook2017})
Let $C$ be a nonempty subset of $H$. Let $T:C\rightarrow H$, then

\noindent (i) $T$ is called nonexpansive, if
$$
\|T\mu-T\nu\| \leq \|\mu-\nu\|, \quad \forall \mu,\nu\in C.
$$

\noindent (ii) $T$ is called firmly nonexpansive, if
$$
\|T\mu-T\nu\|^2 \leq \|\mu-\nu\|^2 - \| (I-T)\mu- (I-T)\nu \|^2, \quad \forall \mu,\nu\in C,
$$
or equivalently
$$
\|T\mu-T\nu\|^2 \leq \langle T\mu - T\nu, \mu - \nu \rangle, \quad \forall \mu,\nu\in C.
$$

\noindent (iii) $T$ is called $\theta$-averaged, $\theta\in (0,1)$, if there exists an nonexpansive mapping $S$ such that $T = (1-\theta)I + \theta S$.
\end{definition}

It is easy to prove that every averaged operator and firmly nonexpansive operator are nonexpansive operators. Also, a firmly nonexpansive operator is $1/2$-averaged.

\begin{definition}(\cite{bauschkebook2017})
Let $A:H\rightarrow 2^H$ be a set-valued operator. $A$ is said to be monotone, if
$$
\langle u-v, x-y \rangle \geq 0, \quad \forall x,y\in H, u\in Ax, v\in Ay.
$$
Further, $A$ is said to be maximally monotone, if its graph is not strictly contained in the graph of any other monotone operator on $H$.
\end{definition}

\begin{definition}(\cite{Byrne2004})
Let $B:H\rightarrow H$ be a single-valued operator.  Then $B$ is called $\beta$-inverse strongly monotone, for some $\beta \in (0, +\infty)$, if
$$
\langle Bx-By,x-y \rangle \geq \beta \|Bx-By\|^{2}, \quad \forall x,y\in H.
$$
\end{definition}

\begin{definition}(\cite{bauschkebook2017})
Let $A:H\rightarrow 2^H$ be a maximally monotone operator. The resolvent operator of $A$ with index $\gamma >0$ is defined as
$$
J_{\gamma A} = (I+\gamma A)^{-1}.
$$
where $I$ is identity operator on $H$.
\end{definition}

It follows from Proposition 23.8 of \cite{bauschkebook2017} that the resolvent operator $J_{\gamma A}$ of a maximally monotone operator $A$ with index $\gamma >0$ is firmly nonexpansive.

Let $f:H\rightarrow (-\infty,+\infty]$ is a proper lower semi-continuous convex function. The subdifferential of $f$ is the set $\partial f(x) = \{u\in H | f(y) \geq f(x) + \langle u, y-x \rangle, \forall y\in H\}$. Let $A = \partial f$, then $J_{\gamma \partial f} = prox_{\gamma f}$. Here $ prox_{\gamma f}$ denotes the proximity operator of $f$ with index $\gamma >0$, which is defined by
$$
prox_{\gamma f}(u) = \arg\min_{x} \left\{  \frac{1}{2\gamma}\|x-u\|^2 + f(x)  \right\}.
$$

\begin{definition}(\cite{Combettes2004Optimization})
Let $C$ be a nonempty subset of $H$ and let $\{x^k\}$ be a sequence in $H$. Then

\noindent (i) $\{x^k\}$ is Fej\'{e}r-monotone with respect to $C$, if
$$
\|x^{k+1}-x\| \leq \|x^k -x\|, \quad \forall x\in C, \forall k\geq 0.
$$

\noindent (ii) $\{x^k\}$ is quasi-Fej\'{e}r-monotone with respect to $C$, if
$$
\|x^{k+1}-x\| \leq \|x^k -x\| + \varepsilon_k, \quad \forall x\in C, \forall k\geq 0,
$$
where $\sum_{k=0}^{+\infty}\varepsilon_k < +\infty$.
\end{definition}

\begin{lemma}(Demiclosedness principle)\label{our-lemma1}(\cite{bauschkebook2017})
Let $H$ a be Hilbert space. Let $C$ be a nonempty closed convex set of $H$ and $\{x^{k}\}_{n\in N}$ be sequence in $C$. Let
$T:D\rightarrow H$ be nonexpansive. Let $x\in H$ such that $x^{k}\rightharpoonup x$ and $T(x^{k})-x^{k} \rightarrow 0$ as $k\rightarrow +\infty$.
Then
$x\in Fix(T)$.
\end{lemma}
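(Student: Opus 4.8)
The plan is to establish directly that $\|Tx - x\| = 0$. First I would observe that since $C$ is closed and convex it is weakly sequentially closed, so the weak limit $x$ lies in $C$ (hence in the domain of $T$) and $Tx$ is well defined. I would also record the standard fact that a weakly convergent sequence is bounded, so $\{\|x^k - x\|\}$ is bounded; this is needed to control a mixed product at the end.

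The core idea is that one cannot pass to the limit directly in $\|x^k - Tx\|$, since the norm is only weakly lower semicontinuous and not weakly continuous. The remedy is to work with the \emph{squared} norm and expand it via the inner product, which isolates a term that is linear in $x^k$ and therefore well behaved under weak convergence. Concretely, I would combine an upper bound coming from nonexpansiveness and the triangle inequality,
$$\|x^k - Tx\| \le \|x^k - Tx^k\| + \|Tx^k - Tx\| \le \|x^k - Tx^k\| + \|x^k - x\|,$$
with the lower bound obtained by expanding the squared norm,
$$\|x^k - Tx\|^2 = \|x^k - x\|^2 + 2\langle x^k - x,\, x - Tx\rangle + \|x - Tx\|^2.$$

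Squaring the first display, substituting into the second, and cancelling the common term $\|x^k - x\|^2$ leaves
$$2\langle x^k - x,\, x - Tx\rangle + \|x - Tx\|^2 \le \|x^k - Tx^k\|^2 + 2\,\|x^k - Tx^k\|\,\|x^k - x\|.$$
I would then let $k \to +\infty$: the inner product tends to $0$ because $x^k \rightharpoonup x$, and the right-hand side tends to $0$ because $\|x^k - Tx^k\| \to 0$ while $\|x^k - x\|$ stays bounded. This forces $\|x - Tx\|^2 \le 0$, hence $Tx = x$ and $x \in Fix(T)$.

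The only genuinely delicate point—the main obstacle—is precisely the failure of weak continuity of the norm; everything hinges on rearranging the estimate so that the surviving obstruction is the linear functional $\langle x^k - x,\, x - Tx\rangle$, which vanishes by the definition of weak convergence. An alternative route would invoke Opial's inequality in Hilbert space (if $x^k \rightharpoonup x$ and $y \ne x$, then $\liminf_k \|x^k - x\| < \liminf_k \|x^k - y\|$) together with $\liminf_k \|x^k - Tx\| \le \liminf_k \|x^k - x\|$ from nonexpansiveness, reaching a contradiction unless $Tx = x$; but this merely repackages the same inner-product computation, so I would favor the direct argument above.
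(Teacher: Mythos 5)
Your proof is correct. The paper itself contains no proof of this lemma---it is simply quoted from Bauschke--Combettes \cite{bauschkebook2017}---and your argument is essentially the standard one found in that reference (Browder's demiclosedness principle): expand $\|x^k - Tx\|^2$ around $x$ so that the only surviving obstruction is the linear term $\langle x^k - x,\, x - Tx\rangle$, which weak convergence annihilates, while nonexpansiveness, the strong convergence $x^k - Tx^k \rightarrow 0$, and boundedness of the weakly convergent sequence control everything else. The only cosmetic difference from the textbook computation is that there one expands $\|(x^k - Tx^k) + (Tx^k - Tx)\|^2$ directly, producing the cross term $\langle x^k - Tx^k,\, Tx^k - Tx\rangle$, whereas you square a triangle-inequality bound; the two calculations are interchangeable. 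You also correctly supply a point the paper's statement garbles (the domain typo $T:D\rightarrow H$): weak sequential closedness of the closed convex set $C$ guarantees $x \in C$, so $Tx$ is well defined.
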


The following lemmas play an important role in the proof of the convergence of the proposed  algorithm.

\begin{lemma}\label{our-lemma2}(\cite{Alvarez2001})
Let $\{\psi^{k}\}$, $\{\delta_{k}\}$ and $\{\alpha_{k}\}$ be sequences in $[0,+\infty)$ such that

\noindent \emph{(a)} $\psi^{k+1}-\psi^{k}\leq \alpha_{k}(\psi^{k}-\psi^{k-1})+\delta_{k}$, $\forall k\geq 1$;

\noindent \emph{(b)} $\sum_{k=1}^{+\infty}\delta_{k} < +\infty$;

\noindent \emph{(c)} there exists a real number $\alpha \in [0,1)$ with $\alpha_{k} \subset[0,\alpha)$.

Then $\psi^{k}$  is convergent. Moreover $\sum_{k=1}^{+\infty}[\psi^{k+1}-\psi^{k}]_{+} < +\infty$,
where $[m]_{+}= \max \{m,0\}$.
\end{lemma}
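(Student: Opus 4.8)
The plan is to collapse the two-term inequality in (a) into a clean one-step scalar recursion for the nonnegative quantities $\theta_k := [\psi^k - \psi^{k-1}]_+$, from which both assertions will follow. First I would note that condition (c) gives $\alpha_k(\psi^k - \psi^{k-1}) \leq \alpha\theta_k$ for every $k$: when $\psi^k - \psi^{k-1} \geq 0$ this is $\alpha_k(\psi^k-\psi^{k-1}) \leq \alpha(\psi^k-\psi^{k-1}) = \alpha\theta_k$, and when $\psi^k - \psi^{k-1} < 0$ the left side is negative while $\theta_k = 0$. Substituting into (a) yields $\psi^{k+1}-\psi^k \leq \alpha\theta_k + \delta_k$, and since the right-hand side is nonnegative, taking positive parts gives the recursion
$$\theta_{k+1} \leq \alpha\theta_k + \delta_k, \qquad \forall k\geq 1.$$

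Next I would unfold this recursion to obtain $\theta_{k+1} \leq \alpha^k\theta_1 + \sum_{j=1}^{k}\alpha^{k-j}\delta_j$, sum over $k$, and interchange the order of summation in the resulting double sum. Using $\alpha<1$ (so that $\sum_{k}\alpha^k<\infty$) together with (b), this produces the bound
$$\sum_{k=1}^{+\infty}\theta_{k+1} \leq \frac{\alpha\,\theta_1}{1-\alpha} + \frac{1}{1-\alpha}\sum_{j=1}^{+\infty}\delta_j < +\infty,$$
which is exactly the claim $\sum_{k=1}^{+\infty}[\psi^{k+1}-\psi^k]_+ < +\infty$. For the convergence of $\{\psi^k\}$ itself I would then introduce the auxiliary sequence $w_k := \psi^k - \sum_{j=2}^{k}\theta_j$. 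Since $\psi^{k+1}-\psi^k - \theta_{k+1} = \min\{\psi^{k+1}-\psi^k,\,0\} \leq 0$, the sequence $\{w_k\}$ is nonincreasing; and because $\psi^k \geq 0$ while $\sum_j\theta_j < \infty$, it is bounded below, hence convergent. As the partial sums $\sum_{j=2}^{k}\theta_j$ converge by the previous step, $\psi^k = w_k + \sum_{j=2}^{k}\theta_j$ converges as well.

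I expect the only genuinely delicate point to be the correct handling of the positive-part operation when passing from (a) to the scalar recursion, namely verifying $\alpha_k(\psi^k-\psi^{k-1}) \leq \alpha\theta_k$ separately according to the sign of $\psi^k-\psi^{k-1}$, since condition (c) controls $\alpha_k$ only from above and the factor $\psi^k-\psi^{k-1}$ may be negative. Once the recursion is secured, the remainder reduces to a geometric-series estimate and the standard decomposition of $\{\psi^k\}$ into a nonincreasing part plus a summable correction.
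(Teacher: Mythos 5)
Your proof is correct. The paper itself offers no proof of this lemma --- it is quoted directly from the cited reference of Alvarez and Attouch --- and your argument is essentially the classical one from that source: passing to the positive parts $\theta_k=[\psi^k-\psi^{k-1}]_+$ to get the one-step recursion $\theta_{k+1}\leq\alpha\theta_k+\delta_k$, summing the geometric unfolding to obtain $\sum_k\theta_k<+\infty$, and then deducing convergence of $\psi^k$ from the monotonicity and boundedness below of $w_k=\psi^k-\sum_{j\leq k}\theta_j$. Both the sign-case verification of $\alpha_k(\psi^k-\psi^{k-1})\leq\alpha\theta_k$ and the interchange of summation are handled correctly, so nothing is missing.
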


\begin{lemma}\label{our-lemma3}(\cite{bauschkebook2017})
Let $C$ be a nonempty subset of $H$ and $\{x^k\}$ be a sequence in $H$ such that the following conditions:

\noindent (i) for every $x\in C$, $\lim_{k\rightarrow +\infty}\|x^k - x\|$ exists;

\noindent (ii) $\omega_{w}(x^k)\subseteq C$.

Then $\{x^k\}$ converges weakly to a point in $C$.

\end{lemma}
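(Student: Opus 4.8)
The plan is to prove weak convergence in three stages: establish boundedness of the sequence, deduce that the weak cluster set is nonempty, and then show that this set is in fact a singleton, from which weak convergence of the entire sequence follows. Throughout I would lean only on the reflexivity of $H$ and the two hypotheses.

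First, I would exploit that $C$ is nonempty: fixing any $x\in C$, hypothesis (i) guarantees that $\lim_{k\to +\infty}\|x^k - x\|$ exists and is finite, so the sequence $\{\|x^k - x\|\}$ is bounded, and hence $\{x^k\}$ is bounded in $H$. Since $H$ is reflexive, bounded sequences are weakly sequentially compact, which gives $\omega_w(x^k)\neq \emptyset$.

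Second, and this is the crux, I would show $\omega_w(x^k)$ contains exactly one point. Suppose $p,q\in\omega_w(x^k)$, realized by subsequences $x^{k_j}\rightharpoonup p$ and $x^{m_\ell}\rightharpoonup q$. By hypothesis (ii) we have $p,q\in C$, so by (i) both $\lim_k\|x^k-p\|$ and $\lim_k\|x^k-q\|$ exist. Expanding the squared norms yields the identity
\[
\|x^k - p\|^2 - \|x^k - q\|^2 = 2\langle x^k, q-p\rangle + \|p\|^2 - \|q\|^2,
\]
whose left-hand side converges; hence $\langle x^k, q-p\rangle$ converges as $k\to+\infty$. Evaluating this single limit along the two subsequences and using weak convergence gives $\langle p, q-p\rangle = \langle q, q-p\rangle$, that is $\|q-p\|^2 = 0$, so $p=q$. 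Thus $\omega_w(x^k) = \{x^*\}$ for some $x^*\in C$.

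Finally, I would upgrade ``unique weak cluster point'' to weak convergence of the full sequence by a standard contradiction argument: if $x^k\not\rightharpoonup x^*$, there exist $w\in H$, $\varepsilon>0$, and a subsequence with $|\langle x^{k_j}-x^*, w\rangle|\geq\varepsilon$; boundedness lets me extract a further weakly convergent sub-subsequence whose limit lies in $\omega_w(x^k)=\{x^*\}$, contradicting the strict separation. Hence $x^k\rightharpoonup x^*\in C$. The main obstacle is the uniqueness step: the key observation is that the difference of the two squared-norm limits linearizes into an inner product against the fixed vector $q-p$, which is precisely what forces the two subsequential weak limits to coincide; the boundedness, weak compactness, and the final upgrade are routine.
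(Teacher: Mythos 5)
Your proof is correct and is essentially the standard argument: the paper itself states this lemma without proof, citing Bauschke--Combettes, and the proof given there proceeds exactly as you do --- boundedness from (i), uniqueness of the weak cluster point via the identity $\|x^k-p\|^2-\|x^k-q\|^2 = 2\langle x^k, q-p\rangle + \|p\|^2-\|q\|^2$ evaluated along the two subsequences, and the routine subsequence extraction to upgrade a unique cluster point to weak convergence of the whole sequence. Nothing is missing.
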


\section{An inexact  inertial Krasnoselskii-Mann algorithm}

In this section, we propose an  inexact inertial Krasnoselskii-Mann algorithm for computing fixed points of nonexpansive operators. We study the convergence and the convergence rate of the proposed algorithm under two different conditions. Now, we are ready to present our  main algorithm:

\begin{equation}\label{two-step algorithm1}
\left\{
\begin{aligned}
\mu^{k}&=z^{k}+\alpha_{k}(z^{k}-z^{k-1}), \\
z^{k+1}&=\mu^{k}+\lambda _{k}(T\mu^{k}+e^{k}-\mu^{k}).
\end{aligned}
\right.
\end{equation}

\begin{theorem}\label{main-theorem 1}
Let $H$ be a real Hilbert space. Let $T:H\rightarrow H$ is a nonexpansive operator such that $Fix(T)\neq \varnothing$. For any given $z^{0}, z^{-1}\in H$, let
the iterative sequences $\{z^{k}\}$ and $\{\mu^{k}\}$ are generated by the iteration scheme (\ref{two-step algorithm1}).
Assume that the parameters $\lambda_{k}, \alpha_{k}$ satisfy the conditions (I) of:

\noindent \emph{(a)}\,  $ 0\leq \alpha_{k} \leq \alpha < 1 $, $ 0\leq \lambda \leq \lambda_{k}\leq\lambda^{'} < 1 $;

\noindent \emph{(b)}\, for every $k\geq 0$, $\sum _{k=0}^{+\infty}\alpha_{k}\|z^{k+1}-z^{k}\|^{2} < +\infty$, and $\sum _{k=0}^{+\infty}\lambda_{k}\|e^{k}\| < +\infty$;

\noindent \emph{(c)}\, $\{z^{k}\}$ is bounded.

Then the following hold:

\noindent \emph{(i)} $\lim_{k\rightarrow +\infty}\|z^{k}-z^{*}\|$ exists, for any $z^{*}\in Fix(T)$.

\noindent \emph{(ii)} $\lim_{k\rightarrow +\infty}\|T\mu^{k}-\mu^{k}\|=0$.

\noindent \emph{(iii)} $\{z^{k}\}$ converges weakly to a fixed point of $T$.
\end{theorem}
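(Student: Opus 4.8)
The plan is to run the two-step Alvarez--Attouch machinery on the scalar sequence $\psi^k := \|z^k - z^*\|^2$, which replaces the Fej\'er monotonicity that the inertial step destroys. First I would rewrite the update as $z^{k+1} = (1-\lambda_k)\mu^k + \lambda_k T\mu^k + \lambda_k e^k$ and abbreviate its error-free part by $w^{k+1} := (1-\lambda_k)\mu^k + \lambda_k T\mu^k$. Using the convex-combination identity $\|(1-\lambda_k)a + \lambda_k b\|^2 = (1-\lambda_k)\|a\|^2 + \lambda_k\|b\|^2 - \lambda_k(1-\lambda_k)\|a-b\|^2$ with $a = \mu^k - z^*$, $b = T\mu^k - z^*$, together with the nonexpansiveness $\|T\mu^k - z^*\| \le \|\mu^k - z^*\|$, I obtain the averaged-operator descent estimate $\|w^{k+1}-z^*\|^2 \le \|\mu^k - z^*\|^2 - \lambda_k(1-\lambda_k)\|\mu^k - T\mu^k\|^2$.

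Next I would expand the inertial term by the identity $\|\mu^k - z^*\|^2 = (1+\alpha_k)\psi^k - \alpha_k\psi^{k-1} + \alpha_k(1+\alpha_k)\|z^k - z^{k-1}\|^2$, which follows from $\mu^k - z^* = (z^k - z^*) + \alpha_k(z^k - z^{k-1})$ and the polarization formula $2\langle z^k - z^*, z^k - z^{k-1}\rangle = \psi^k - \psi^{k-1} + \|z^k - z^{k-1}\|^2$. To pass from $w^{k+1}$ to $z^{k+1}$ I would use hypothesis (c): since $\{z^k\}$, hence $\{\mu^k\}$, $\{T\mu^k\}$ and $\{w^{k+1}\}$, are bounded, there is a constant $M$ with $\|w^{k+1}-z^*\| \le M$, and expanding $\|z^{k+1}-z^*\|^2 = \|w^{k+1}-z^* + \lambda_k e^k\|^2$ yields $\psi^{k+1} \le \|w^{k+1}-z^*\|^2 + \xi_k$ with $\xi_k := 2M\lambda_k\|e^k\| + \lambda_k^2\|e^k\|^2$ summable by (b). Assembling these estimates and discarding the nonpositive term gives $\psi^{k+1} - \psi^k \le \alpha_k(\psi^k - \psi^{k-1}) + \delta_k$, where $\delta_k := \alpha_k(1+\alpha_k)\|z^k - z^{k-1}\|^2 + \xi_k$ is summable by (b). Lemma \ref{our-lemma2} then delivers assertion (i) (convergence of $\psi^k$, hence of $\|z^k - z^*\|$) and the bonus estimate $\sum_k [\psi^{k+1}-\psi^k]_+ < +\infty$.

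For (ii) I would instead retain the previously discarded term: the same recursion gives $\lambda_k(1-\lambda_k)\|\mu^k - T\mu^k\|^2 \le (\psi^k - \psi^{k+1}) + \alpha_k(\psi^k - \psi^{k-1}) + \delta_k$. Summing over $k$, the telescoping part is finite since $\psi^k$ converges, the middle part is bounded by $\alpha\sum_k [\psi^k - \psi^{k-1}]_+ < +\infty$, and $\sum_k \delta_k < +\infty$; hence $\sum_k \lambda_k(1-\lambda_k)\|\mu^k - T\mu^k\|^2 < +\infty$. Since $0 < \lambda \le \lambda_k \le \lambda' < 1$ forces $\lambda_k(1-\lambda_k) \ge c > 0$, I conclude $\sum_k\|\mu^k - T\mu^k\|^2 < +\infty$, and in particular $\|\mu^k - T\mu^k\| \to 0$.

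Finally, for (iii) I would invoke Lemma \ref{our-lemma3} with $C = Fix(T)$: hypothesis (i) of that lemma is exactly assertion (i). For the cluster-point condition I first note $\|\mu^k - z^k\| = \alpha_k\|z^k - z^{k-1}\|$, so $\sum_k \|\mu^k - z^k\|^2 \le \alpha\sum_k \alpha_k\|z^k - z^{k-1}\|^2 < +\infty$ by (b), whence $\mu^k - z^k \to 0$ strongly; then if $z^{k_j} \rightharpoonup \bar z$ along a subsequence, also $\mu^{k_j} = z^{k_j} + (\mu^{k_j} - z^{k_j}) \rightharpoonup \bar z$, and together with $\mu^{k_j} - T\mu^{k_j} \to 0$ from (ii) the demiclosedness principle (Lemma \ref{our-lemma1}) gives $\bar z \in Fix(T)$, so $\omega_w(z^k) \subseteq Fix(T)$ and Lemma \ref{our-lemma3} yields the weak convergence. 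The main obstacle is precisely the loss of Fej\'er monotonicity caused by the inertial extrapolation, which rules out a direct bound $\|z^{k+1}-z^*\| \le \|z^k - z^*\|$; the device that overcomes it is the two-step inequality fed into Lemma \ref{our-lemma2}, and the delicate bookkeeping is expressing $\|\mu^k - z^*\|^2$ through $\psi^k,\psi^{k-1}$ while absorbing both the inertial residual and the inexactness $e^k$ into a single summable $\delta_k$ (using boundedness (c) to linearize the error cross term).
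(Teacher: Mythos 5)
Your proposal is correct and follows essentially the same route as the paper: the same error-free decomposition $w^k$, the same convexity identity and inertial expansion of $\|\mu^k-z^*\|^2$, the Alvarez--Attouch lemma (Lemma \ref{our-lemma2}) for part (i), and demiclosedness plus Lemma \ref{our-lemma3} for part (iii). Your only departures are cosmetic improvements: you sum the key inequality to get $\sum_k\|T\mu^k-\mu^k\|^2<+\infty$ (the paper merely passes to the limit term by term), and you obtain $\mu^k-z^k\to 0$ directly from the summability in (b) rather than through the paper's chain $z^{k+1}-\mu^k\to 0$, $z^{k+1}-z^k\to 0$.
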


\begin{proof}
(i) For the sake of convenience, we define
\begin{equation}\label{eq3-1}
w^{k} =\mu^{k}+\lambda_{k}(T\mu^{k}-\mu^{k}).
\end{equation}
Then, the iterative sequence $\{z^{k+1}\}$ in (\ref{two-step algorithm1}) can be rewritten as
\begin{equation}\label{eq3-2}
z^{k+1} = w^{k}+\lambda_{k}e^{k}.
\end{equation}

Let $z^{*}\in Fix(T)$. By (\ref{eq3-2}), we have
\begin{align}\label{eq3-4}
\|z^{k+1}-z^{*}\|^{2} &=\|w^{k}-z^{*}+\lambda_{k}e^{k}\|^{2} \nonumber\\
& \leq \|w^{k}-z^{*}\|^{2}+ 2\langle z^{k+1}-z^{*},\lambda_{k}e^{k}\rangle  \nonumber\\
&\leq \|w^{k}-z^{*}\|^{2}+2\lambda_{k}\|z^{k+1}-z^{*}\|\|e^{k}\|,
\end{align}
where the first inequality comes from the fact that $\|x+y\|^2 \leq \|x\|^2 + 2 \langle x+y, y\rangle$, for any $x,y\in H$,
 and the second inequality is due to the Cauchy-Schwartz inequality. With the help of the equality
 $\|(1-\alpha)x+\alpha y\|^{2}=(1-\alpha)\|x\|^{2}+\alpha \|y\|^{2}-\alpha (1-\alpha)\|x-y\|^{2}$, for any $\alpha \in R$, and $x,y\in H$. Then, by (\ref{eq3-1}), we have
\begin{align}\label{eq3-5}
\|\omega^{k}-z^{*}\|^{2} & = (1-\lambda_{k})\|\mu^{k}-z^{*}\|^{2}+\lambda_{k}\|T\mu^{k}-z^{*}\|^{2}-\lambda_{k}(1-\lambda_{k})\|T\mu^{k}-\mu^{k}\|^{2} \nonumber\\
&\leq  (1-\lambda_{k})\|\mu^{k}-z^{*}\|^{2}+\lambda_{k}\|\mu^{k}-z^{*}\|^{2}-\lambda_{k}(1-\lambda_{k})\|T\mu^{k}-\mu^{k}\|^{2}\nonumber\\
& = \|\mu^{k}-z^{*}\|^{2} -\lambda_{k}(1-\lambda_{k})\|T\mu^{k}-\mu^{k}\|^{2}\nonumber\\
&=(1+\alpha_{k})\|z^{k}-z^{*}\|^{2}-\alpha_{k}\|z^{k-1}-z^{*}\|^{2}\nonumber\\
&+\alpha_{k}(1+\alpha_{k})\|z^{k}-z^{k-1}\|^{2}-\lambda_{k}(1-\lambda_{k})\|T\mu^{k}-\mu^{k}\|^{2}.
\end{align}

Let $\delta_{k}=2\|z^{k+1}-z^{*}\|$, and $\rho_{k}=\alpha_{k}(1+\alpha_{k})$.
Then, we arrive
\begin{align}\label{eq3-6}
\|z^{k+1}-z^{*}\|^{2}&\leq(1+\alpha_{k})\|z^{k}-z^{*}\|^{2}-\alpha_{k}\|z^{k-1}-z^{*}\|^{2}+\delta_{k}\lambda_{k}\|e^{k}\| \nonumber\\
&+\rho_{k}\|z^{k}-z^{k-1}\|^{2}-\lambda_{k}(1-\lambda_{k})\|T\mu^{k}-\mu^{k}\|^{2},
\end{align}
which implies that
\begin{align}\label{eq3-7}
\|z^{k+1}-z^{*}\|^{2}-\|z^{k}-z^{*}\|^{2}&\leq\alpha_{k}(\|z^{k}-z^{*}\|^{2}-\|z^{k-1}-z^{*}\|^{2})+\delta_{k}\lambda_{k}\|e^{k}\| \nonumber\\
&+\rho_{k}\|z^{k}-z^{k-1}\|^{2}.
\end{align}

Notice that the condition (a), (b) and (c), we have $\sum _{k=0}^{\infty}\rho_{k}\|z^{k}-z^{k-1}\|^{2} < +\infty$
and $\{\delta_{k}\}$ is bounded. By Lemma \ref{our-lemma2}, we have $\lim_{k\rightarrow +\infty}\|z^{k}-z^{*}\|$ exists and
$\sum _{k=0}^{\infty}[\|z^{k}-z^{*}\|^{2}- \|z^{k-1}-z^{*}\|^{2}]_{+}< +\infty$.

(ii) It follows from (\ref{eq3-6}) that
\begin{align}\label{eq3-8}
 \lambda_{k}(1-\lambda_{k})\|T\mu^{k}-\mu^{k}\|^{2}&\leq \|z^{k}-z^{*}\|^{2}-\|z^{k+1}-z^{*}\|^{2}+\alpha_{k}(\|z^{k}-z^{*}\|^{2}-\|z^{k-1}-z^{*}\|^{2})\nonumber\\
 &+\delta_{k}\lambda_{k}\|e^{k}\|+\rho_{k}\|z^{k}-z^{k-1}\|^{2}.
\end{align}
Letting $k\rightarrow +\infty$ in the above inequality, we obtain that $\lim_{k\rightarrow +\infty}\|T\mu^{k}-\mu^{k}\|=0$.

(iii)  From (\ref{eq3-1}),  we have $\|\omega^{k}-\mu^{k}\|=\lambda_{k}\|T\mu^{k}-\mu^{k}\| \leq \lambda^{'} \|T\mu^{k}-\mu^{k}\|$.  By (ii), we get $\lim_{k\rightarrow +\infty}\|\omega^{k}-\mu^{k}\|=0$. Then
\begin{align}\label{eq3-9}
 \|z^{k+1}-\mu^{k}\|&=\|z^{k+1}-\omega^{k}+\omega^{k}-\mu^{k}\| \nonumber\\
 &\leq \|z^{k+1}-\omega^{k}\|+ \|\omega^{k}-\mu^{k}\| \nonumber\\
 &\leq \lambda_k \|e_{k}\|+\|\omega^{k}-\mu^{k}\|.
\end{align}
According to condition (b) and $\lim_{k\rightarrow +\infty}\|\omega^{k}-\mu^{k}\|=0$, we get
\begin{equation}\label{eq3-10}
\lim_{k\rightarrow +\infty}\|z^{k+1}-\mu^{k}\|=0.
\end{equation}
Further, we have
\begin{align}\label{eq3-14}
\|z^{k+1}-z^{k}\|&\leq \|z^{k+1}-\mu^{k}\|+\|\mu^{k}-z^{k}\| \nonumber\\
&\leq \|z^{k+1}-\mu^{k}\|+\alpha_{k}\|z^{k}-z^{k-1}\|\nonumber\\
&\rightarrow 0    \textrm{ as } k \rightarrow +\infty,
\end{align}
and $\|\mu^{k}-z^{k}\|\leq \|\mu^{k}-z^{k+1}\|+\|z^{k+1}-z^{k}\| \rightarrow 0    \textrm{ as } k \rightarrow +\infty$.

Next, we prove that $\omega_{w}(z_{k})\subseteq Fix(T)$. In fact, let $\bar{z}\in \omega_{w}(z_{k})$, such that $z^{k_{n}}\rightharpoonup \bar{z}$. Since
$\lim_{k\rightarrow +\infty}\|\mu^{k}-z^{k}\|=0$, then $\mu^{k_{n}}\rightharpoonup \bar{z}$. Notice that $\lim_{k\rightarrow +\infty}\|T\mu^{k}-\mu^{k}\|=0$, it follows from the demiclosedness of nonexpansive operators, we have $\bar{z}\in Fix (T)$. That is $\omega_{w}(z_{k})\subseteq Fix(T)$. By Lemma \ref{our-lemma3}, we can conclude that $\{z_{k}\}$ converges weakly to a fixed point of $T$. This completes the proof.

\end{proof}

In the following, we prove the convergence of the proposed iteration scheme (\ref{two-step algorithm1}) by removing the condition $\sum_{k=0}^{+\infty}\alpha_k \|z^{k+1}- z^k\|^2<+\infty$ in Theorem \ref{main-theorem 1}. However, we have to add more strict conditions on the relaxation parameters $\lambda_k$ and the error term $e^k$.

\begin{theorem}\label{main-theorem 2}
Let $H$ be a real Hilbert space. Let $T:H\rightarrow H$ is a nonexpansive operator such that $Fix(T)\neq \varnothing$. For any given $z^{0}, z^{-1}\in H$,
let the iterative sequences $\{z^{k}\}, \{\mu^{k}\}$ are defined by (\ref{two-step algorithm1}). Assume that the parameters $\{\lambda_{k}\}, \{\alpha_{k}\}$ are nondecreasing and satisfy the conditions (II) of:

\noindent \emph{(a)}  $ 0\leq  \alpha_{k} \leq \alpha < 1 $ with $\alpha_0 =0$ , and $\alpha_{k}$ is nondecreasing;

\noindent \emph{(b)}  Let $\lambda, \sigma, \delta > 0 $ such that
\begin{equation}
\delta > \frac{\alpha[\alpha (1+\alpha)+\sigma]}{1-\alpha^{2}} \textrm{ and } 0<\lambda \leq \lambda_{k}\leq \frac{\delta-\alpha[\alpha (1+\alpha)+\alpha \delta+\sigma]}{\delta[1+\alpha (1+\alpha)+\alpha \delta+\sigma]};
\end{equation}

\noindent \emph{(c)} $\{z^{k}\}$ is bounded;

\noindent \emph{(d)} $\sum_{k=0}^{+\infty}\|e^k\| < +\infty$.

Then the following hold :

\noindent \emph{(i)} $\sum _{k=0}^{\infty}\|z^{k+1}-z^{k}\|^{2} < +\infty$. Moreover,
$\lim_{k\rightarrow +\infty}\|z^{k+1}-z^{k}\|=0$.

\noindent \emph{(ii)} $\lim_{k\rightarrow +\infty}\|z^{k}-z^{*}\|$ exists, for any $z^{*}\in Fix(T)$.

\noindent \emph{(iii)} $\lim_{k\rightarrow +\infty}\|T\mu^{k}-\mu^{k}\|=0$ and $\{z^{k}\}$ converges weakly to a fixed point of $T$.
\end{theorem}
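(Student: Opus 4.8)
The plan is to prove everything through part (i): once $\sum_{k=0}^{+\infty}\|z^{k+1}-z^k\|^2<+\infty$ is known, the hypotheses of Theorem \ref{main-theorem 1} are automatically satisfied and parts (ii)--(iii) follow without further work. Indeed $\sum_k\alpha_k\|z^{k+1}-z^k\|^2\le\alpha\sum_k\|z^{k+1}-z^k\|^2<+\infty$ and $\sum_k\lambda_k\|e^k\|\le\sum_k\|e^k\|<+\infty$ by (d), while (a), (c) and the upper bound in (b) (which one checks is strictly below $1$) supply condition (I)(a),(c); hence Theorem \ref{main-theorem 1}(i)--(iii) apply verbatim. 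So I will focus on the square-summability of the increments, writing $\varphi_k=\|z^k-z^*\|^2$ for a fixed $z^*\in Fix(T)$.

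First I would reuse the estimate (\ref{eq3-6}) established in the proof of Theorem \ref{main-theorem 1},
\begin{equation}
\varphi_{k+1}\le(1+\alpha_k)\varphi_k-\alpha_k\varphi_{k-1}+\alpha_k(1+\alpha_k)\|z^k-z^{k-1}\|^2-\lambda_k(1-\lambda_k)\|T\mu^k-\mu^k\|^2+\delta_k\lambda_k\|e^k\|,
\end{equation}
which holds with no restriction on the parameters; here $\delta_k=2\|z^{k+1}-z^*\|$ is bounded because $\{z^k\}$ is, so the last term is dominated by $C\|e^k\|$ and is summable by (d). The essential new step is to convert the ``good'' term $-\lambda_k(1-\lambda_k)\|T\mu^k-\mu^k\|^2$ into a negative multiple of $\|z^{k+1}-z^k\|^2$. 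For this I would use the defining relations $z^{k+1}-\mu^k=\lambda_k(T\mu^k-\mu^k)+\lambda_k e^k$ and $\mu^k-z^k=\alpha_k(z^k-z^{k-1})$, whence $\lambda_k(T\mu^k-\mu^k)=(z^{k+1}-z^k)-\alpha_k(z^k-z^{k-1})-\lambda_k e^k$; expanding the resulting square and splitting the cross term by Young's inequality (whose weight is governed by $\sigma$) bounds the good term below by a positive multiple of $\|z^{k+1}-z^k\|^2$ minus a controlled multiple of $\|z^k-z^{k-1}\|^2$, up to a further summable error.

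Next I would introduce the energy functional
\begin{equation}
\Gamma_k=\varphi_k-\alpha_k\varphi_{k-1}+\delta\,\|z^k-z^{k-1}\|^2,
\end{equation}
and aim to show $\Gamma_{k+1}\le\Gamma_k-\rho\|z^{k+1}-z^k\|^2+\varepsilon_k'$ for some $\rho>0$ and a summable $\{\varepsilon_k'\}$. Two structural features make this work: since $\alpha_k$ is nondecreasing, the term $(\alpha_k-\alpha_{k+1})\varphi_k\le0$ arising when one passes from $-\alpha_k\varphi_{k-1}$ to $-\alpha_{k+1}\varphi_k$ may simply be dropped; and since $\{z^k\}$ is bounded, $\Gamma_k\ge-\alpha\sup_j\varphi_j>-\infty$ is bounded below. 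Summing the descent inequality and using $\sum_k\varepsilon_k'<+\infty$ then yields $\sum_k\|z^{k+1}-z^k\|^2<+\infty$, and in particular $\|z^{k+1}-z^k\|\to0$, which is (i).

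The main obstacle is the coefficient bookkeeping in the previous step: one must verify that the total coefficient of $\|z^k-z^{k-1}\|^2$ --- assembled from the inertial term $\alpha_k(1+\alpha_k)$, the negative contribution of the Young split, and the term $\alpha_{k+1}\delta$ carried over from $\Gamma_{k+1}$ --- stays strictly below the coefficient of $\|z^{k+1}-z^k\|^2$ produced by the good term, uniformly in $k$. Performing this comparison with $\alpha_k\le\alpha$ and $\lambda\le\lambda_k$ is precisely what forces the two inequalities in (b): the lower bound on $\delta$ makes the admissible interval for $\lambda_k$ nonempty, and the upper bound on $\lambda_k$ guarantees $\rho>0$. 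With (i) secured, parts (ii) and (iii) follow from Theorem \ref{main-theorem 1} as noted at the outset.
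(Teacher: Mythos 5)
Your overall strategy coincides with the paper's. The reduction of (ii)--(iii) to Theorem \ref{main-theorem 1} is sound: once (i) holds, $\sum_k\alpha_k\|z^{k+1}-z^k\|^2\le\alpha\sum_k\|z^{k+1}-z^k\|^2<+\infty$, $\sum_k\lambda_k\|e^k\|\le\sum_k\|e^k\|<+\infty$, and the upper bound in (II)(b) is indeed $<1$, so conditions (I) are met; the paper does essentially this (it re-invokes Lemma \ref{our-lemma2} with (\ref{eq3-7}) and says the weak-convergence part is identical). And your proof of (i) is the paper's proof in outline: start from (\ref{eq3-6}), convert the good term via $\lambda_k(T\mu^k-\mu^k)=(z^{k+1}-z^k)-\alpha_k(z^k-z^{k-1})-\lambda_k e^k$ and a Young split, form the Bo\c{t}-type functional $\psi^k-\alpha_k\psi^{k-1}+(\cdot)\|z^k-z^{k-1}\|^2$, use monotonicity of $\alpha_k$ to drop one term and boundedness of $\{z^k\}$ to bound the functional below, then telescope.

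There is, however, a concrete gap: the coefficient $\delta$ you commit to in $\Gamma_k=\varphi_k-\alpha_k\varphi_{k-1}+\delta\|z^k-z^{k-1}\|^2$ is not the right one, and with it the descent $\Gamma_{k+1}\le\Gamma_k-\rho\|z^{k+1}-z^k\|^2+\varepsilon_k'$ is false in general under (a)--(d), because condition (b) places no upper bound on $\delta$ itself. Take $\alpha=0$ (so $\alpha_k\equiv 0$, $\mu^k=z^k$), $\sigma=1$, $\delta=10$, $\lambda_k\equiv 1/2$; these satisfy (b), whose upper bound is then $1/(1+\sigma)=1/2$. The recursion yields at best $\varphi_{k+1}-\varphi_k\le-\frac{1-\lambda_k}{\lambda_k}\|z^{k+1}-z^k\|^2+(\text{summable})=-\|z^{k+1}-z^k\|^2+\cdots$ (no Young split is even needed, since the cross term is absent), hence $\Gamma_{k+1}-\Gamma_k\le(\delta-1)\|z^{k+1}-z^k\|^2-\delta\|z^k-z^{k-1}\|^2+\cdots$, and the coefficient $\delta-1=9$ of $\|z^{k+1}-z^k\|^2$ is positive: no choice of $\rho>0$ works. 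The paper avoids this by using the coefficient the recursion actually produces, $\varsigma_k=\alpha_k(1+\alpha_k)+\alpha_k\delta(1-\lambda_k)$, obtained with the Young weight $\tau_k=1/(\alpha_k+\delta\lambda_k)$ (so the weight is governed by $\delta$, not by $\sigma$ as you state; $\sigma$ only enters as the final descent margin). With that choice the $\|z^k-z^{k-1}\|^2$ terms cancel identically in $\phi^{k+1}-\phi^k$, leaving $(\omega_k+\varsigma_{k+1})\|z^{k+1}-z^k\|^2$, and the inequality $\omega_k+\varsigma_{k+1}\le-\sigma$ is exactly what (b) encodes. The fix to your plan is small --- replace $\delta$ by the constant $\alpha(1+\alpha)+\alpha\delta$ (which is $0$, not $\delta$, when $\alpha=0$), since $\varsigma_k\le\alpha(1+\alpha)+\alpha\delta$ and $\omega_k+\alpha(1+\alpha)+\alpha\delta\le-\sigma$ under (b) --- but as written, the verification you defer to the end would not go through, and (b) alone does not rescue it.
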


\begin{proof}
(i) The same to Theorem \ref{main-theorem 1}, we define
\begin{equation}
w^{k} =\mu^{k}+\lambda_{k}(T\mu^{k}-\mu^{k}).
\end{equation}
Then, we obtain
\begin{equation}
z^{k+1} =w^{k}+\lambda_{k}e^{k}.
\end{equation}

According to $z^{k+1}=\mu^{k}+\lambda _{k}(T\mu^{k}+e^{k}-\mu^{k})$, we have
\begin{align}\label{eq32-1}
\|T\mu^{k}-\mu^{k}\|^{2} & = \left \|\frac{z^{k+1}-\mu^{k}}{\lambda_{k}}-e^{k} \right\|^{2} \nonumber\\
& = \left \|  \frac{z^{k+1}-\mu^k}{\lambda_k}    \right\|^2 - 2 \left\langle  \frac{z^{k+1}-\mu^k}{\lambda_k}, e^k    \right\rangle + \|e^k\|^2 \nonumber \\
& \geq \left \|  \frac{z^{k+1}-\mu^k}{\lambda_k}    \right\|^2 -  2 \left\|  \frac{z^{k+1}-\mu^k}{\lambda_k}  \right\|  \|e^k\| + \|e^k\|^2 \nonumber \\
& = \left \|\frac{z^{k+1}-z^{k}}{\lambda_{k}}+\frac{\alpha_{k}(z^{k-1}-z^{k})}{\lambda_{k}}\right\|^{2} -  \frac{ 2}{\lambda_k}\|  z^{k+1}-\mu^k \|  \|e^k\| + \|e^k\|^2 \nonumber \\
&\geq \frac{\|z^{k+1}-z^{k}\|^{2}}{\lambda_{k}^{2}}+\frac{\alpha_{k}^{2}\|z^{k-1}-z^{k}\|^{2}}{\lambda_{k}^{2}} +\frac{\alpha_{k}}{\lambda_{k}^{2}} (-\tau_{k}\|z^{k+1}-z^{k}\|^{2}-\frac{1}{\tau_{k}} \|z^{k-1}-z^{k}\|^{2})\nonumber\\
&-   \frac{2}{\lambda_k}\|  z^{k+1}-u^k \|  \|e^k\| + \|e^k\|^2,
\end{align}
where $\tau_{k}=\frac{1}{\alpha_{k}+\delta\lambda_{k}}$.

Let $\psi^{k+1}=\|z^{k+1}-z^{*}\|^{2}$, it follows from (\ref{eq3-6}) and (\ref{eq32-1}), we obtain
\begin{align}\label{eq32-2}
\psi^{k+1}-\psi^{k}-\alpha_{k}(\psi^{k}-\psi^{k-1}) & \leq \delta_{k}\lambda_{k}\|e^{k}\|
+\rho_{k}\|z^{k}-z^{k-1}\|^{2}\nonumber\\
&-\lambda_{k}(1-\lambda_{k})[\frac{\|z^{k+1}-z^{k}\|^{2}}{\lambda_{k}^{2}}+\frac{\alpha_{k}^{2}\|z^{k-1}-z^{k}\|^{2}}{\lambda_{k}^{2}}\nonumber\\ &-\frac{\alpha_{k}\tau_{k}}{\lambda_{k}^{2}}\|z^{k+1}-z^{k}\|^{2}-\frac{\alpha_{k}}{\tau_{k}\lambda_{k}^{2}} \|z^{k-1}-z^{k}\|^{2}+\|e^{k}\|^{2}\nonumber\\
&-   \frac{2}{\lambda_k}\|  z^{k+1}-u^k \|  \|e^k\| ]\nonumber\\
& \leq M_k \|e^{k}\| + \varsigma_{k} \| z^k - z^{k-1} \|^2 + \omega_{k} \| z^{k+1} -z^k \|^2,
\end{align}
where $\varsigma_{k}=\rho_{k}-\frac{(1-\lambda_{k})\alpha_{k}^{2}}
{\lambda_{k}}+\frac{\alpha_k (1-\lambda_k)}{\lambda_k \tau_k}$,
$\omega_{k}=\frac{(1-\lambda_k)(\alpha_k\tau_k -1)}{\lambda_k}$, and $M_k = \delta_{k}\lambda_{k} +  2(1-\lambda_k)\|  z^{k+1}-u^k \| $. Notice that $\tau_{k}=\frac{1}{\alpha_{k}+\delta\lambda_{k}}$, then $\delta = \frac{1-\tau_k \alpha_k}{\tau_k \lambda_k}$. We have $0<\varsigma_{k}\leq\alpha(1+\alpha) + \alpha \delta$ and $\omega_k > 0$. From the condition (c) $\{z^k\}$ is bounded, then there exists a constant $M>0$ such that $M_k \leq M$, for any $k$.

In the following, we follow the same technique used in \cite{Bot2015AMC}. Let $\phi^{k}=\psi^{k}-\alpha_{k}\psi^{k-1}+\varsigma_{k}\|z^{k}-z^{k-1}\|^{2}$. By condition (a), we have
\begin{align}\label{eq32-6}
\phi^{k+1}-\phi^{k}&\leq\psi^{k+1}-(1+\alpha_{k})\psi^{k}+\alpha_{k}\psi^{k-1}+\varsigma_{k+1}\|z^{k+1}-z^{k}\|^{2}-\varsigma_{k}\|z^{k}-z^{k-1}\|^{2}\nonumber\\
&\leq M\|e^{k}\| +\varsigma_{k}\|z^{k-1}-z^{k}\|^{2}+\omega_{k}\|z^{k+1}-z^{k}\|^{2}+\varsigma_{k+1}\|z^{k+1}-z^{k}\|^{2}-\varsigma_{k}\|z^{k}-z^{k-1}\|^{2}\nonumber\\
& = M\|e^{k}\|+(\omega_{k}+\varsigma_{k+1})\|z^{k+1}-z^{k}\|^{2}.
\end{align}

Next, we claim that
\begin{equation}\label{eq32-7}
\omega_{k}+\varsigma_{k+1}\leq-\sigma,  \textrm{ for some } \sigma > 0.
\end{equation}
In fact,
\begin{align}\label{eq32-8}
&\frac{(1-\lambda_{k})(\alpha_{k}\tau_{k}-1)}{\lambda_{k}}+\varsigma_{k+1}\leq -\sigma  \nonumber\\
&\Longleftrightarrow (1-\lambda_{k})(\alpha_{k}\tau_{k}-1)+\lambda_{k}(\varsigma_{k+1}+\sigma)\leq 0  \nonumber\\
&\Longleftrightarrow
(1-\lambda_k)\frac{-\delta \lambda_k}{\alpha_k + \delta \lambda_k} + \lambda_{k}(\varsigma_{k+1}+\sigma)\leq 0  \nonumber\\
&\Longleftrightarrow (\alpha_{k}+\delta\lambda_{k})(\alpha(1+\alpha) + \alpha \delta +\sigma)+\delta \lambda_{k} \leq \delta,
\end{align}
which is true by taking into account the condition of (a) and (b). Then, we obtain from (\ref{eq32-6}) that
\begin{align}\label{eq32-9}
\phi^{k+1}-\phi^{k} & \leq M\|e^{k}\|-\sigma\|z^{k+1}-z^{k}\|^{2}.
\end{align}
Therefore, we get
\begin{equation}\label{eq32-10}
\phi^{k+1} \leq \phi^{1} + M \sum_{i=1}^{k}\|e^{i}\|.
\end{equation}
It follows from $\sum_{k=1}^{+\infty}\|e^{k}\| < +\infty$ that $\phi^{k}<+\infty$ for any $k$. Let $\overline{\phi} >0$ such that $\phi^{k} \leq \overline{\phi}$.

On the other hand, we obtain from the definition of $\phi^{k}$ that
\begin{equation}\label{eq32-11}
\phi^{k} \geq  \psi^{k}-\alpha_{k}\psi^{k-1} \geq \psi^k - \alpha \psi^{k-1} \geq - \alpha \psi^{k-1}.
\end{equation}
Therefore, we get
\begin{equation}\label{eq32-12}
\psi^k \leq \alpha \psi^{k-1} + \phi^k \leq \alpha^k \psi^{0} + \sum_{i=0}^{k-1}\alpha^{i} \phi^{k-i} \leq \alpha^k \psi^{0} + \frac{1}{1-\alpha} \overline{\phi}.
\end{equation}
Further, we obtain from (\ref{eq32-9}), (\ref{eq32-11}) and (\ref{eq32-12}) that
\begin{align}
\sigma \sum_{k=1}^{n}\| z^{k+1} - z^k \|^2 & \leq \phi^{1} - \phi^{n+1} + M \sum_{k=1}^{n}\|e^k\| \nonumber \\
& \leq \phi^{1} + \alpha \psi^{n} + M \sum_{k=1}^{n}\|e^k\| \nonumber \\
& \leq \phi^{1} + \alpha^{n+1} \psi^{0}  + \frac{\alpha}{1-\alpha}\overline{\phi} + M \sum_{k=1}^{n}\|e^k\| \nonumber \\
\end{align}
which means that
\begin{equation}
\sum_{k=1}^{+\infty}\| z^{k+1} - z^k \|^2 < +\infty.
\end{equation}
Consequently, we have $\lim_{k\rightarrow +\infty}\| z^{k+1} - z^k \| = 0$.

(ii) By (i) and (\ref{eq3-7}), from Lemma \ref{our-lemma2}, we get that $\lim_{k\rightarrow +\infty}\|z^{k}-z^{*}\|$ exists. Besides, by (i) and
 $\mu^{k}=z_{k}+\alpha_{k}(z^{k}-z^{k-1})$, then
\begin{align}
\|\mu^{k}-z^{k+1}\|&\leq\|z^{k}-z^{k+1}\|+\alpha_{k}\|z^{k}-z^{k-1}\|\nonumber\\
&\leq\|z^{k}-z^{k+1}\|+\alpha\|z^{k}-z^{k-1}\|
\rightarrow 0,  \textrm{ as } k \rightarrow +\infty.
\end{align}

(iii) By (ii) and condition (b), we have
\begin{align}
\|T\mu^{k}-\mu^{k}\| & = \left\|\frac{z^{k+1}-\mu^{k}}{\lambda_{k}}-e_{k} \right\| \nonumber \\
& \leq \frac{\|z^{k+1}-\mu^{k}\|}{\lambda}+\|e_{k}\|  \rightarrow 0, \textrm{ as } k \rightarrow +\infty,
\end{align}
and
\begin{equation}
\|\mu^{k}-z^{k}\|\leq \|\mu^{k}-z^{k+1}\|+\|z^{k+1}-z^{k}\| \rightarrow 0   \textrm{ as }  k \rightarrow +\infty.
\end{equation}

Since the reason of proving that $\{z^{k}\}$ converges weakly to a fixed point of $T$ is the same as Theorem \ref{main-theorem 1}, so we omit it here.
\end{proof}

\begin{remark}
The proposed  inexact  inertial KM algorithm (\ref{two-step algorithm1}) can also be viewed as a special case of the inexact KM algorithm (\ref{inexact-KM}). In fact, the iteration scheme (\ref{two-step algorithm1}) can be rewritten as follows
\begin{equation}
z^{k+1} = z^k + \lambda_k (Tz^k - z^k + \overline{e}^k),
\end{equation}
where $\overline{e}^k = \frac{\alpha_k}{\lambda_k}(z^k - z^{k-1}) + T\mu^k - Tz^k - \alpha_k (z^k - z^{k-1}) + e^k$. Then, we have
\begin{align}
\lambda_k \|\overline{e}^k\| & = \|\alpha_k(z^k - z^{k-1}) + \lambda_k( T\mu^k - Tz^k) - \lambda_k \alpha_k (z^k - z^{k-1}) + \lambda_k e^k  \| \nonumber \\
& \leq \alpha_k \|z^k - z^{k-1}\| + 2\lambda_k \alpha_k \|z^k - z^{k-1}\| + \lambda_k \|e^k\|.
\end{align}
If we assume that $\sum_{k=0}^{+\infty}\alpha_k \|z^k - z^{k-1}\| < +\infty $ and $\sum_{k=0}^{+\infty}\lambda_k\|e^k\| < +\infty$, then the convergence of $\{z^k\}$ can be directly obtained from the inexact KM algorithm (\ref{inexact-KM}). It is obvious that the convergence conditions used in Theorem \ref{main-theorem 1} and Theorem \ref{main-theorem 2} are weaker than these conditions mentioned before. In addition,
Theorem \ref{main-theorem 2} requires a stronger condition on the error term than Theorem \ref{main-theorem 1}.

\end{remark}

\begin{remark}

(i)\, Let $\alpha_k =0$, then the proposed inexact inertial KM algorithm (\ref{two-step algorithm1}) reduces to the classical inexact KM algorithm (\ref{inexact-KM}). Since the iterative sequence generated by the inexact KM algorithm is quasi-Fej\'{e}r monotone, which is bounded. Therefore, the condition of $\{z^k\}$ is bounded in Theorem \ref{main-theorem 1} and Theorem \ref{main-theorem 2} are naturally satisfied.

(ii)\, Let $e_k =0$, then (\ref{two-step algorithm1}) recovers the inertial KM algorithm (\ref{inertial KM}). We can also remove the condition of $\{z^k\}$ is bounded, which follows the same proof of \cite{Mainge2008JCAM, Bot2015AMC}.

\end{remark}

In the following, we prove the convergence rate of the proposed inexact inertial KM algorithm (\ref{two-step algorithm1}). The following theorem is obtained in the spirit of Shehu \cite{Shehu2018NFAO}.

\begin{theorem}\label{main-theorem-rate}
Let $H$ be a real Hilbert space. Let $T:H\rightarrow H$ is a nonexpansive operator such that $Fix(T)\neq \varnothing$. For any given $z^{0}, z^{-1}\in H$,
let the iterative sequences $\{z^{k}\}, \{\mu^{k}\}$ are defined by (\ref{two-step algorithm1}). Assume that the parameters $\{\lambda_{k}\}$ and $\{\alpha_{k}\}$  satisfy one of the conditions of Theorem \ref{main-theorem 1} and Theorem \ref{main-theorem 2}, respectively. Then, for any $z^{*}\in Fix(T)$, we have
\begin{equation}
\min_{1\leq i \leq k} \| T\mu^{i} - \mu^{i} \|^2 \leq \frac{1}{k \lambda (1-\theta)} (\|z^{1} - z^{*}\| + \Delta ),
\end{equation}
where $\Delta = \alpha \sum _{k=1}^{\infty}[\|z^{k}-z^{*}\|^{2}- \|z^{k-1}-z^{*}\|^{2}]_{+} + \sum_{k=1}^{+\infty}\delta_k \lambda_k \|e^k\| + \sum_{k=1}^{+\infty}\rho_k \| z^k - z^{k-1} \|^2 < +\infty$, and $\theta = \lambda^{'}$ or $\frac{\delta-\alpha[\alpha (1+\alpha)+\alpha \delta+\sigma]}{\delta[1+\alpha (1+\alpha)+\alpha \delta+\sigma]}$, respectively.
\end{theorem}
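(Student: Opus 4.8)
The plan is to start from the one-step energy estimate (\ref{eq3-6}) that was already derived inside the proof of Theorem \ref{main-theorem 1}, isolate the term $\|T\mu^{k}-\mu^{k}\|^{2}$, replace the variable coefficient $\lambda_{k}(1-\lambda_{k})$ by a uniform positive lower bound, sum over the index, telescope the $\|z^{i}-z^{*}\|^{2}$ contributions, and finish with the elementary fact that a minimum is dominated by an average. Concretely, I would first rewrite (\ref{eq3-6}) in the form
\[
\lambda_{i}(1-\lambda_{i})\|T\mu^{i}-\mu^{i}\|^{2} \leq \bigl(\|z^{i}-z^{*}\|^{2} - \|z^{i+1}-z^{*}\|^{2}\bigr) + \alpha_{i}\bigl(\|z^{i}-z^{*}\|^{2} - \|z^{i-1}-z^{*}\|^{2}\bigr) + \delta_{i}\lambda_{i}\|e^{i}\| + \rho_{i}\|z^{i}-z^{i-1}\|^{2},
\]
where $\delta_{i}=2\|z^{i+1}-z^{*}\|$ and $\rho_{i}=\alpha_{i}(1+\alpha_{i})$ are as in the proof of Theorem \ref{main-theorem 1}. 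Under either parameter regime the relaxation parameters obey $\lambda \leq \lambda_{i} \leq \theta < 1$, with $\theta = \lambda'$ under the conditions of Theorem \ref{main-theorem 1} and $\theta$ equal to the explicit fraction under the conditions of Theorem \ref{main-theorem 2}; hence $\lambda_{i}(1-\lambda_{i}) \geq \lambda(1-\theta) > 0$, and I may bound the left-hand coefficient below by this constant.

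Next I would sum the resulting inequality from $i=1$ to $k$. The first bracket telescopes to $\|z^{1}-z^{*}\|^{2} - \|z^{k+1}-z^{*}\|^{2} \leq \|z^{1}-z^{*}\|^{2}$. For the inertial bracket I would use $\alpha_{i} \leq \alpha$ and pass to the positive part, so that $\sum_{i=1}^{k}\alpha_{i}\bigl(\|z^{i}-z^{*}\|^{2}-\|z^{i-1}-z^{*}\|^{2}\bigr) \leq \alpha\sum_{i=1}^{\infty}[\|z^{i}-z^{*}\|^{2}-\|z^{i-1}-z^{*}\|^{2}]_{+}$; the two remaining sums are precisely the error and inertial-displacement pieces of $\Delta$. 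Collecting everything gives $\lambda(1-\theta)\sum_{i=1}^{k}\|T\mu^{i}-\mu^{i}\|^{2} \leq \|z^{1}-z^{*}\|^{2} + \Delta$, and since $k\min_{1\leq i\leq k}\|T\mu^{i}-\mu^{i}\|^{2} \leq \sum_{i=1}^{k}\|T\mu^{i}-\mu^{i}\|^{2}$, dividing by $k\lambda(1-\theta)$ yields the stated rate.

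The only substantive obstacle is to certify that $\Delta < +\infty$ under both parameter regimes, since the estimate is vacuous otherwise; this is what forces the case distinction. The positive-part series is finite by Lemma \ref{our-lemma2}, already invoked in the proofs of both theorems. The sequence $\{\delta_{k}\}$ is bounded because $\lim_{k}\|z^{k}-z^{*}\|$ exists, so $\sum_{k}\delta_{k}\lambda_{k}\|e^{k}\|$ is dominated by a constant multiple of $\sum_{k}\lambda_{k}\|e^{k}\|$ (condition (I)(b)) or of $\sum_{k}\|e^{k}\|$ (condition (II)(d)). Finally $\rho_{k}\leq\alpha(1+\alpha)$ together with the summability of $\|z^{k}-z^{k-1}\|^{2}$ — which is condition (I)(b) in the first case and conclusion (i) of Theorem \ref{main-theorem 2} in the second — controls the last series. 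I would therefore verify these three summability statements case by case, which simultaneously confirms that $\theta < 1$ and hence that the leading constant $\tfrac{1}{k\lambda(1-\theta)}$ is finite and positive.
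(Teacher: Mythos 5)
Your proposal is correct and follows essentially the same route as the paper's own proof: starting from (\ref{eq3-8}), passing to the positive part of the inertial bracket, bounding $\lambda_{i}(1-\lambda_{i})\geq\lambda(1-\theta)$ uniformly, telescoping the sum from $i=1$ to $k$, and finishing with $k\min_{1\leq i\leq k}\|T\mu^{i}-\mu^{i}\|^{2}\leq\sum_{i=1}^{k}\|T\mu^{i}-\mu^{i}\|^{2}$. Your explicit case-by-case verification that $\Delta<+\infty$ is a small but worthwhile addition, since the paper merely asserts this finiteness in the statement rather than checking it under each parameter regime.
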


\begin{proof}
By (\ref{eq3-8}), we have
\begin{align}\label{}
 \lambda_{k}(1-\lambda_{k})\|T\mu^{k}-\mu^{k}\|^{2}&\leq \|z^{k}-z^{*}\|^{2}-\|z^{k+1}-z^{*}\|^{2}+\alpha_{k}(\|z^{k}-z^{*}\|^{2}-\|z^{k-1}-z^{*}\|^{2})\nonumber\\
 &+\delta_{k}\lambda_{k}\|e^{k}\|+\rho_{k}\|z^{k}-z^{k-1}\|^{2} \nonumber \\
 & \leq \|z^{k}-z^{*}\|^{2}-\|z^{k+1}-z^{*}\|^{2}+\alpha_{k}[\|z^{k}-z^{*}\|^{2}- \|z^{k-1}-z^{*}\|^{2}]_{+} \nonumber \\
 & + \delta_{k}\lambda_{k}\|e^{k}\|+\rho_{k}\|z^{k}-z^{k-1}\|^{2}.
\end{align}
Therefore, we get
\begin{align}\label{}
 \lambda(1-\theta)\sum_{i=1}^{k}\|T\mu^{i}-\mu^{i}\|^{2}&\leq \|z^{1}-z^{*}\|^{2}-\|z^{k+1}-z^{*}\|^{2}+\alpha \sum_{i=1}^{k}(\|z^{i}-z^{*}\|^{2}-\|z^{i-1}-z^{*}\|^{2})\nonumber\\
 &+\sum_{i=1}^{k}\delta_{i}\lambda_{i}\|e^{i}\|+ \sum_{i=1}^{k}\rho_{i}\|z^{i}-z^{i-1}\|^{2} \nonumber \\
 & \leq  \| z^1 - z^{*} \|^2 + \Delta,
\end{align}
where $\Delta = \alpha \sum _{k=1}^{\infty}[\|z^{k}-z^{*}\|^{2}- \|z^{k-1}-z^{*}\|^{2}]_{+} + \sum_{k=1}^{+\infty}\delta_k \lambda_k \|e^k\| + \sum_{k=1}^{+\infty}\rho_k \| z^k - z^{k-1} \|^2 < +\infty$. Then, we arrive at
$$
\min_{1\leq i \leq k} \| T\mu^{i} - \mu^{i} \|^2 \leq \frac{1}{k \lambda (1-\theta)} (\|z^{1} - z^{*}\| + \Delta ).
$$
This completes the proof.

\end{proof}

\begin{remark}
In Theorem \ref{main-theorem-rate}, we provide the convergence rate of the inexact inertial KM algorithm (\ref{two-step algorithm1}) in the sense of $\min_{1\leq i \leq k} \| T\mu^{i} - \mu^{i} \|^2$. While the convergence rate for the KM algorithm (\ref{KM-iteration}) and the inexact KM algorithm (\ref{inexact-KM}) is usually measured by $\|T\mu^k - \mu^k\|$. See, for instance \cite{Cominetti2014,Liang2016F,Matsushita2017}. Therefore, the convergence rate result for the inexact inertial KM algorithm (\ref{two-step algorithm1}) is weaker than the KM algorithm (\ref{KM-iteration}) and the inexact KM algorithm (\ref{inexact-KM}).

\end{remark}

\section{Applications}

In this section, we study several applications of the proposed algorithm (\ref{two-step algorithm1}) for solving monotone inclusion problems and the corresponding convex minimization problems, respectively.

First, we consider the simplest monotone inclusion problem:
\begin{equation}\label{monotone-inclusion-1}
\textrm{ find }\, x\in H, \textrm{ such that }\, 0\in Ax,
\end{equation}
where $H$ is a real Hilbert space, and $A:H \rightarrow 2^{H}$ is maximally monotone operator.

The following convex minimization problem is closely related to the monotone inclusion problem (\ref{monotone-inclusion-1}).
\begin{equation}\label{convex-minimization-1}
\min_{x\in H}\ f(x),
\end{equation}
where $f:H \rightarrow (-\infty, +\infty]$ is a proper closed lower semi-continuous convex function. The most well-known algorithm for solving the monotone inclusion problem (\ref{monotone-inclusion-1}) and the convex minimization problem (\ref{convex-minimization-1}) is the proximal point algorithm (PPA) \cite{Eckstein1992}.  Next we will apply the proposed inexact inertial KM algorithm (\ref{two-step algorithm1}) to solve the problem (\ref{monotone-inclusion-1}) and (\ref{convex-minimization-1}), respectively.

\begin{theorem}\label{main-theorem 3}
Let $H$ be a real Hilbert space. Let $A:H \rightarrow 2^{H}$ is maximally monotone operator.  Suppose that $\Omega :=zer A \neq \varnothing$. Let the iterative sequences $\{z^{k}\}, \{\mu^{k}\}$ are generated by following scheme:
\begin{equation}\label{ppa-algorithm1}
\left\{
\begin{aligned}
\mu^{k}&=z^{k}+\alpha_{k}(z^{k}-z^{k-1}), \\
z^{k+1}&=\mu^{k}+\lambda _{k}(J_{\rho A}\mu^{k}+e^{k}-\mu^{k}),
\end{aligned}
\right.
\end{equation}
where $\rho >0$. Assume that the parameters $\lambda_{k}$ and $\alpha_{k}$ satisfy $ 0\leq \alpha_{k} \leq \alpha < 1 $, $ 0\leq \lambda \leq \lambda_{k}\leq \lambda^{'} < 2 $ and the  (b), (c) of conditions (I).
Then the following hold :

\noindent \emph{(i)} $\lim_{k\rightarrow +\infty}\|z^{k}-z^{*}\|$ exists, for any $z^{*}\in \Omega$.

\noindent \emph{(ii)} $\lim_{k\rightarrow +\infty}\|J_{\rho A}\mu^{k}-\mu^{k}\|=0$.

\noindent \emph{(iii)} $\{z^{k}\}$ converges weakly to a zero of $A$.

\noindent \emph{(iv)} $
\min_{1\leq i \leq k} \| T\mu^{i} - \mu^{i} \|^2 \leq \frac{1}{k \lambda (1-\lambda')} (\|z^{1} - z^{*}\| + \Delta )
$, where $\Delta$ is the same as Theorem \ref{main-theorem-rate}.
\end{theorem}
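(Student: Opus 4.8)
The plan is to recognize that the only structural difference between (\ref{ppa-algorithm1}) and the general scheme (\ref{two-step algorithm1}) is that the nonexpansive operator is now the resolvent $J_{\rho A}$, which is in fact \emph{firmly} nonexpansive (Proposition 23.8 of \cite{bauschkebook2017}). Two consequences of this will drive the proof. First, $Fix(J_{\rho A}) = \Omega$, since $\mu = J_{\rho A}\mu$ is equivalent to $0\in A\mu$; hence weak convergence to a fixed point of $J_{\rho A}$ is exactly weak convergence to a zero of $A$. Second, firm nonexpansiveness gives a strictly stronger descent estimate than mere nonexpansiveness, and this is precisely what enlarges the admissible relaxation range from $[0,1)$ to $[0,2)$.

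First I would reproduce the argument of Theorem \ref{main-theorem 1} up to the identity (\ref{eq3-5}), replacing the single step that used $\|T\mu^{k}-z^{*}\|\le\|\mu^{k}-z^{*}\|$. Using firm nonexpansiveness with $z^{*}\in Fix(J_{\rho A})$ (so that $(I-J_{\rho A})z^{*}=0$) gives
\[
\|J_{\rho A}\mu^{k}-z^{*}\|^{2}\le\|\mu^{k}-z^{*}\|^{2}-\|\mu^{k}-J_{\rho A}\mu^{k}\|^{2}.
\]
Substituting this into the convex-combination identity for $\|w^{k}-z^{*}\|^{2}$ and simplifying yields
\[
\|w^{k}-z^{*}\|^{2}\le\|\mu^{k}-z^{*}\|^{2}-\lambda_{k}(2-\lambda_{k})\|J_{\rho A}\mu^{k}-\mu^{k}\|^{2},
\]
so the coefficient $\lambda_{k}(1-\lambda_{k})$ appearing throughout Theorem \ref{main-theorem 1} is upgraded to $\lambda_{k}(2-\lambda_{k})$, which stays strictly positive for every $\lambda_{k}\in(0,2)$. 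With this single change, the recursion (\ref{eq3-7}) and the appeal to Lemma \ref{our-lemma2} are untouched, giving (i); the analogue of (\ref{eq3-8}) then forces $\lim_{k}\|J_{\rho A}\mu^{k}-\mu^{k}\|=0$, which is (ii).

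For (iii) I would argue exactly as in the final part of Theorem \ref{main-theorem 1}: from (ii) and condition (b) one obtains $\|z^{k+1}-\mu^{k}\|\to0$ and hence $\|\mu^{k}-z^{k}\|\to0$, so any weak cluster point $\bar z$ of $\{z^{k}\}$ is also a weak cluster point of $\{\mu^{k}\}$; the demiclosedness principle (Lemma \ref{our-lemma1}) applied to $J_{\rho A}$ together with $\|J_{\rho A}\mu^{k}-\mu^{k}\|\to0$ gives $\bar z\in Fix(J_{\rho A})=\Omega$, and Lemma \ref{our-lemma3} yields weak convergence to a point of $\Omega$. For the rate (iv) I would mirror the proof of Theorem \ref{main-theorem-rate}, summing the improved inequality and taking the minimum over $1\le i\le k$, the only modification being that the coefficient controlling $\|J_{\rho A}\mu^{i}-\mu^{i}\|^{2}$ is now $\lambda_{k}(2-\lambda_{k})$ rather than $\lambda_{k}(1-\lambda_{k})$.

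The main point to get right is the firm-nonexpansiveness descent estimate producing the coefficient $\lambda_{k}(2-\lambda_{k})$; everything else is a transcription of Theorems \ref{main-theorem 1} and \ref{main-theorem-rate}. As a clean alternative that avoids recomputation, one may write $J_{\rho A}=\tfrac12(I+R)$ with $R=2J_{\rho A}-I$ nonexpansive, so that (\ref{ppa-algorithm1}) becomes an instance of (\ref{two-step algorithm1}) for the operator $R$ with relaxation $\tilde\lambda_{k}=\lambda_{k}/2\in[0,1)$ and error $\tilde e^{k}=2e^{k}$. The hypotheses transfer directly, since $\tilde\lambda_{k}\|\tilde e^{k}\|=\lambda_{k}\|e^{k}\|$ and $Fix(R)=Fix(J_{\rho A})=\Omega$, so Theorems \ref{main-theorem 1} and \ref{main-theorem-rate} apply as black boxes, after translating back through $\|R\mu^{k}-\mu^{k}\|=2\|J_{\rho A}\mu^{k}-\mu^{k}\|$.
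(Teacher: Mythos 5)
Your proposal is correct, and it actually contains the paper's proof inside it: the ``clean alternative'' you sketch at the end --- writing $J_{\rho A}=\tfrac{1}{2}(I+N)$ with $N=2J_{\rho A}-I$ nonexpansive, so that (\ref{ppa-algorithm1}) becomes an instance of (\ref{two-step algorithm1}) for $N$ with relaxation $\lambda_k/2\in[0,1)$ and error $2e^k$, then citing Theorem \ref{main-theorem 1} and Theorem \ref{main-theorem-rate} as black boxes --- is precisely what the paper does, including the observations that $(\lambda_k/2)\|2e^k\|=\lambda_k\|e^k\|$ keeps condition (b) intact and that $Fix(N)=Fix(J_{\rho A})=zer\,A$. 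Your primary route is genuinely different: instead of reducing to the averaged form, you re-run the proof of Theorem \ref{main-theorem 1} using firm nonexpansiveness of the resolvent, which upgrades the coefficient $\lambda_k(1-\lambda_k)$ to $\lambda_k(2-\lambda_k)$; this is sound, since the identity $\|(1-\lambda)x+\lambda y\|^2=(1-\lambda)\|x\|^2+\lambda\|y\|^2-\lambda(1-\lambda)\|x-y\|^2$ holds for every real $\lambda$, so nothing breaks when $\lambda_k\in(1,2)$, and $\lambda_k(2-\lambda_k)$ stays bounded away from zero on $[\lambda,\lambda']$ with $\lambda'<2$. What the paper's reduction buys is a two-line proof; what your direct computation buys is a transparent and correctly scaled rate constant: it yields $\min_{1\le i\le k}\|J_{\rho A}\mu^i-\mu^i\|^2\le\frac{1}{k\lambda(2-\lambda')}\left(\|z^1-z^*\|^2+\Delta\right)$, whereas the constant $\frac{1}{k\lambda(1-\lambda')}$ stated in item (iv) is meaningful only after translating $T$, $\lambda$ and $\lambda'$ into the reduced scheme (as literally written it is vacuous once $\lambda'\ge 1$). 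Your direct version makes that bookkeeping explicit, which is a small but real improvement over the paper's formulation.
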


\begin{proof}
Since $J_{\rho A}$ is firmly monexpansive (is also $1/2$-averaged) and $Fix(J_{\rho A})= zer A$. Then there exists an nonexpansive operator $N$ such that $J_{\rho A}=\frac {1}{2}I +\frac {1}{2} N$ and $Fix(J_{\rho A})=Fix(N)$. Therefore the iteration scheme $\{z^{k+1}\}$ of  (\ref{ppa-algorithm1}) can be rewritten as
\begin{equation}
 z^{k+1}=\mu^{k}+\frac{1}{2}\lambda _{k}(N\mu^{k}+2e^{k}-\mu^{k}).
\end{equation}
 Then we can get the conclusions (i), (ii) and (iii) from Theorem \ref{main-theorem 1} and (iv) from Theorem \ref{main-theorem-rate} immediately.
\end{proof}

Similar to Theorem \ref{main-theorem 3}, we obtain the following convergence theorem from Theorem \ref{main-theorem 2}. Since the proof is the same as Theorem \ref{main-theorem 3}, so we omit it here.

\begin{theorem}\label{main-theorem 4}

Let $H$ be a real Hilbert space. Let $A:H \rightarrow 2^{H}$ is maximally monotone operator such that $\Omega :=zer A \neq \varnothing$. Let the iterative sequences $\{z^{k}\}, \{\mu^{k}\}$ are generated by (\ref{ppa-algorithm1}).
Assume that the parameters $\lambda_{k}$ and $\alpha_{k}$ satisfy:  let $\lambda, \sigma, \delta > 0 $ such that
\begin{equation}
\delta > \frac{\alpha[\alpha (1+\alpha)+\sigma]}{1-\alpha^{2}} \textrm{ and } 0<\lambda \leq \lambda_{k}\leq \overline{\lambda} :=2\frac{\delta-\alpha[\alpha (1+\alpha)+\alpha \delta+\sigma]}{\delta[1+\alpha (1+\alpha)+\alpha \delta+\sigma]};
\end{equation}
and the (a),(c) (d) from conditions (II).
Then the following hold:

\noindent \emph{(i)} $\sum _{k=0}^{\infty}\|z^{k+1}-z^{k}\|^{2} < +\infty$. Moreover, $\lim_{k\rightarrow +\infty}\|z^{k+1}-z^{k}\|=0$.

\noindent \emph{(ii)} $\lim_{k\rightarrow +\infty}\|z^{k}-z^{*}\|$ exists, for any $z^{*}\in \Omega$.

\noindent \emph{(iii)} $\lim_{k\rightarrow +\infty}\|J_{\rho A}\mu^{k}-\mu^{k}\|=0$ and $\{z^{k}\}$ converges weakly to a point in $zer A$.

\noindent \emph{(iv)} $
\min_{1\leq i \leq k} \| T\mu^{i} - \mu^{i} \|^2 \leq \frac{1}{k \lambda (1-\overline{\lambda})} (\|z^{1} - z^{*}\| + \Delta )
$, where $\Delta$ is the same as Theorem \ref{main-theorem-rate}.
\end{theorem}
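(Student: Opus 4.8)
The plan is to reduce Theorem \ref{main-theorem 4} to Theorem \ref{main-theorem 2} by exploiting the averagedness of the resolvent, in exact parallel with the way Theorem \ref{main-theorem 3} was deduced from Theorem \ref{main-theorem 1}. First I would invoke Proposition 23.8 of \cite{bauschkebook2017} to recall that $J_{\rho A}$ is firmly nonexpansive, hence $1/2$-averaged, so there exists a nonexpansive operator $N$ with $J_{\rho A} = \frac{1}{2}I + \frac{1}{2}N$ and $Fix(J_{\rho A}) = Fix(N) = zer\,A$ (the last equality because $x = J_{\rho A}x \Leftrightarrow 0\in Ax$). Substituting this decomposition into the second line of (\ref{ppa-algorithm1}) and collecting terms rewrites the scheme as
\begin{equation*}
z^{k+1} = \mu^{k} + \tfrac{1}{2}\lambda_{k}\bigl(N\mu^{k} + 2e^{k} - \mu^{k}\bigr),
\end{equation*}
which is precisely the inexact inertial KM iteration (\ref{two-step algorithm1}) driven by the nonexpansive operator $N$, with effective relaxation parameter $\widetilde{\lambda}_{k} := \tfrac{1}{2}\lambda_{k}$ and effective error $\widetilde{e}^{k} := 2e^{k}$.

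The key step is to verify that the transformed data $(N, \widetilde{\lambda}_{k}, \widetilde{e}^{k})$ satisfy the hypotheses (II) of Theorem \ref{main-theorem 2}. Because the admissible range for $\lambda_k$ here is stated with the leading factor $2$, namely $\lambda_{k}\le \overline{\lambda} = 2\,\frac{\delta-\alpha[\alpha(1+\alpha)+\alpha\delta+\sigma]}{\delta[1+\alpha(1+\alpha)+\alpha\delta+\sigma]}$, the halved step obeys $\widetilde{\lambda}_{k} = \tfrac{1}{2}\lambda_{k}\le \frac{\delta-\alpha[\alpha(1+\alpha)+\alpha\delta+\sigma]}{\delta[1+\alpha(1+\alpha)+\alpha\delta+\sigma]}$, which is exactly the upper bound required in condition (b) of (II); the lower bound and the constraint on $\delta$ are inherited unchanged. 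The inertial parameters $\{\alpha_{k}\}$ meet (a) by assumption, the iterates are bounded by (c), and the error summability transfers since $\sum_{k}\|\widetilde{e}^{k}\| = 2\sum_{k}\|e^{k}\| < +\infty$ by (d). Hence all conditions of Theorem \ref{main-theorem 2} hold for $(N,\widetilde{\lambda}_{k},\widetilde{e}^{k})$.

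With the reduction established, conclusions (i)–(iii) follow directly: the summability and vanishing of $\|z^{k+1}-z^{k}\|$ is part (i) of Theorem \ref{main-theorem 2}, the existence of $\lim_{k}\|z^{k}-z^{*}\|$ is its (ii), and the weak convergence of $\{z^{k}\}$ to a point of $Fix(N)=zer\,A$ together with $\|N\mu^{k}-\mu^{k}\|\to 0$ is its (iii); the identity $\|J_{\rho A}\mu^{k}-\mu^{k}\| = \tfrac{1}{2}\|N\mu^{k}-\mu^{k}\|$ then converts this into $\|J_{\rho A}\mu^{k}-\mu^{k}\|\to 0$. For the rate (iv) I would apply Theorem \ref{main-theorem-rate} to the operator $N$ with the transformed parameters, identifying the role of $\theta$ with the appropriate upper bound. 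The only point requiring care — and the main, though essentially bookkeeping, obstacle — is to propagate the factor $1/2$ consistently through both the step size and the error term, so that the doubled window $\overline{\lambda} < 2$ in the hypotheses lands exactly inside the admissible range of Theorem \ref{main-theorem 2}; this is precisely the reason the bound is written with the leading factor of $2$. Since every analytic difficulty is already absorbed into Theorem \ref{main-theorem 2} and Theorem \ref{main-theorem-rate}, the argument is a verification rather than a fresh estimate, which is why the paper may legitimately omit the repeated computation.
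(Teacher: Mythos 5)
Your proposal is correct and is essentially the paper's own argument: the paper omits this proof as being identical to that of Theorem \ref{main-theorem 3}, which performs exactly your reduction ($J_{\rho A}=\tfrac{1}{2}I+\tfrac{1}{2}N$ with $N$ nonexpansive, halved relaxation parameter $\tfrac{1}{2}\lambda_k$, doubled error $2e^{k}$) and then invokes the general convergence theorem (here Theorem \ref{main-theorem 2}) together with Theorem \ref{main-theorem-rate}. One remark: carrying your bookkeeping honestly through part (iv) produces the constant $\tfrac{1}{k\lambda(2-\overline{\lambda})}$ rather than the stated $\tfrac{1}{k\lambda(1-\overline{\lambda})}$, which indicates the paper's printed constant is inaccurate (indeed vacuous when $\overline{\lambda}\geq 1$); this is a defect of the statement, not of your argument.
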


\begin{corollary}\label{ppa corollary1}
Let $H$ be a real Hilbert space. Let $f:H\rightarrow (-\infty,+\infty]$ is a proper, closed lower semi-continuous, convex function. Suppose that $\Omega :=Argmin f\neq \varnothing$. Let the iterative sequences $\{z^{k}\}, \{\mu^{k}\}$ are generated by following algorithm:
\begin{equation}\label{ppa-1}
\left\{
\begin{aligned}
\mu^{k}&=z^{k}+\alpha_{k}(z^{k}-z^{k-1}), \\
z^{k+1}&=\mu^{k}+\lambda _{k}(prox_{\rho f}\mu^{k}+e^{k}-\mu^{k}).
\end{aligned}
\right.
\end{equation}
Assume that the parameters $\lambda_{k}, \alpha_{k}$ satisfy the conditions from Theorem \ref{main-theorem 3} or Theorem \ref{main-theorem 4}.
Then the following hold:

\noindent \emph{(i)} $\lim_{k\rightarrow +\infty}\|z^{k}-z^{*}\|$ exists, for any $z^{*}\in \Omega$.

\noindent \emph{(ii)} $\{z^{k}\}$ converges weakly to a solution the convex minimization problem (\ref{convex-minimization-1}).

\noindent \emph{(iii)} $
\min_{1\leq i \leq k} \| T\mu^{i} - \mu^{i} \|^2 \leq \frac{1}{k \lambda (1-\theta)} (\|z^{1} - z^{*}\| + \Delta )
$, where $\theta = \lambda'$ or $\theta = \overline{\lambda}$, and $\Delta$ is the same as Theorem \ref{main-theorem-rate}.
\end{corollary}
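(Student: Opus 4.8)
The plan is to recognize Corollary \ref{ppa corollary1} as the specialization of Theorem \ref{main-theorem 3} and Theorem \ref{main-theorem 4} to the subdifferential operator $A = \partial f$. The first step is to invoke the classical fact (Rockafellar) that for a proper, closed, lower semi-continuous convex function $f$, the subdifferential $\partial f : H \rightarrow 2^{H}$ is maximally monotone. This places us exactly in the setting of the monotone inclusion problem (\ref{monotone-inclusion-1}) with $A = \partial f$, so that Theorem \ref{main-theorem 3} and Theorem \ref{main-theorem 4} become available once the remaining identifications are made.

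Next I would match the operators and the solution sets. By Fermat's rule, $x \in Argmin f$ if and only if $0 \in \partial f(x)$, so that $\Omega = Argmin f = zer(\partial f)$; the standing hypothesis $Argmin f \neq \varnothing$ therefore guarantees $zer(\partial f) \neq \varnothing$, which is the nonemptiness requirement of Theorem \ref{main-theorem 3} and Theorem \ref{main-theorem 4}. Moreover, as already recorded in the preliminaries, the resolvent of the subdifferential coincides with the proximity operator, $J_{\rho \partial f} = prox_{\rho f}$ for every $\rho > 0$. Consequently the iteration (\ref{ppa-1}) is literally the iteration (\ref{ppa-algorithm1}) with $A = \partial f$, and nothing further needs to be constructed.

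With these identifications in hand, the three conclusions follow by direct application. Under the parameter conditions inherited from Theorem \ref{main-theorem 3} (respectively Theorem \ref{main-theorem 4}), part (i) of the source theorem yields existence of $\lim_{k\rightarrow +\infty}\|z^{k}-z^{*}\|$ for every $z^{*}\in \Omega$, which is statement (i); part (iii) gives that $\{z^{k}\}$ converges weakly to a point of $zer(\partial f) = Argmin f$, that is, to a minimizer of $f$, which is statement (ii); and the rate of Theorem \ref{main-theorem-rate}, transported through Theorem \ref{main-theorem 3} or Theorem \ref{main-theorem 4}, supplies statement (iii), with $\theta = \lambda'$ in the first parameter regime and $\theta = \overline{\lambda}$ in the second.

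Since every step is a translation of an already-established result, there is essentially no analytic obstacle here: the only genuine content is the maximal monotonicity of $\partial f$ together with Fermat's rule, both entirely standard. The single point meriting a moment's care is the bookkeeping of the threshold on the relaxation parameter and of the constant $\theta$: because $prox_{\rho f}$ is $1/2$-averaged, the admissible range for $\lambda_{k}$ is $\lambda' < 2$ in the Theorem \ref{main-theorem 3} regime and $\overline{\lambda}$ in the Theorem \ref{main-theorem 4} regime, mirroring the halving $J_{\rho A} = \tfrac{1}{2}I + \tfrac{1}{2}N$ used there, and one must simply verify that these ranges are reproduced correctly so that the rate constant $1/(k\lambda(1-\theta))$ is the one inherited from the corresponding source theorem.
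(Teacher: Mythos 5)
Your proposal is correct and follows essentially the same route as the paper's own proof: set $A=\partial f$, invoke the maximal monotonicity of the subdifferential and the identity $prox_{\rho f}=J_{\rho\partial f}$, and then apply Theorem \ref{main-theorem 3} and Theorem \ref{main-theorem 4} directly. Your explicit use of Fermat's rule to identify $Argmin\, f$ with $zer(\partial f)$, and your bookkeeping of the relaxation threshold coming from the $1/2$-averagedness of the resolvent, are details the paper leaves implicit, but they do not change the argument.
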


\begin{proof}
Since the subdifferential of a proper, convex and lower semi-continuous function is maximally monotone operator, then $\partial f$ is maximally monotone.
Because of $prox_{\rho f}=J_{\rho \partial f}$, Then we can get the conclusions (i)-(iii) from Theorem \ref{main-theorem 3} and Theorem \ref{main-theorem 4}, respectively.
\end{proof}

\begin{remark}
The newly obtained inexact inertial proximal point algorithm includes  proximal point algorithms as a special case.

(1) Let $\alpha_k = 0$, then (\ref{ppa-algorithm1}) reduces to the classical proximal point algorithm with error studied in \cite{RockafellarSIAM1976,Eckstein1992};

(2) Let $e^k =0$, then (\ref{ppa-algorithm1}) recovers the relaxed inertial proximal point algorithm in \cite{attouch:hal-01708905}.

\end{remark}

Second, we consider solving the following monotone inclusion problem:
\begin{equation}\label{monotone-inclusion-2}
\textrm{ find }\, x\in H, \textrm{ such that }\, 0\in Ax+Bx,
\end{equation}
where $A:H\rightarrow 2^H$ is maximally monotone operator and $B:H\rightarrow H$ is a $\beta$-inverse strongly monotone operator, for some $\beta >0$.
The corresponding convex optimization problem to the monotone inclusion problem (\ref{monotone-inclusion-2}) is
\begin{equation}\label{convex-minimization-2}
\min_{x\in H} f(x)+g(x),
\end{equation}
where $f:H\rightarrow R$ is convex differentiable with a $\frac{1}{\beta}$-Lipschitz continuous gradient, and $g:H\rightarrow (-\infty,+\infty]$ is a proper closed lower semi-continuous convex function. We always assume the solution of (\ref{monotone-inclusion-2}) and (\ref{convex-minimization-2}) are not empty. In the following, we apply the proposed inexact inertial KM algorithm to solve (\ref{monotone-inclusion-2}) and (\ref{convex-minimization-2}).

\begin{theorem}\label{thorem FB}
Let $H$ be a real Hilbert space. Let $A:H\rightarrow 2^H$ is maximally monotone operator and $B:H\rightarrow H$ is a $\beta$-inverse strongly monotone operator.  Suppose that $\Omega :=zer (A+B) \neq \varnothing$. Let the iterative sequences $\{z^{k}\}$ and $\{\mu^{k}\}$ are generated by following scheme:
\begin{equation}\label{FB-algorithm1}
\left\{
\begin{aligned}
\mu^{k}&=z^{k}+\alpha_{k}(z^{k}-z^{k-1}), \\
z^{k+1}&=\mu^{k}+\lambda_{k}(J_{\rho A}(\mu^k-\rho (B\mu^{k} + e^{1,k} )+e^{2,k}-\mu^{k}),
\end{aligned}
\right.
\end{equation}
where $\rho \in (0,2\beta)$.
Assume that the parameters $\lambda_{k}$ and $\alpha_{k}$ satisfy $ 0\leq \alpha_{k} \leq \alpha < 1 $, $ 0\leq \lambda \leq \lambda_{k}\leq  \lambda^{'} < \frac{4\beta-\rho}{2\beta} $ and the  (b), (c) of conditions (I), $\sum_{k=0}^{+\infty}\lambda_k \|e^{1,k}\|<+\infty$ and $\sum_{k=0}^{+\infty}\lambda_k \|e^{2,k}\|<+\infty$.
Then the following hold :

\noindent \emph{(i)} $\lim_{k\rightarrow +\infty}\|z^{k}-z^{*}\|$ exists, for any $z^{*}\in \Omega$.

\noindent \emph{(ii)}$\lim_{k\rightarrow +\infty}\|J_{\rho A}(I-\rho B)\mu^{k}-\mu^{k}\|=0$.

\noindent \emph{(iii)} $\{z^{k}\}$ converges weakly to a point in $zer(A+B)$.

\noindent \emph{(iv)} $
\min_{1\leq i \leq k} \| T\mu^{i} - \mu^{i} \|^2 \leq \frac{1}{k \lambda (1-\lambda')} (\|z^{1} - z^{*}\| + \Delta )
$, where $\Delta$ is the same as Theorem \ref{main-theorem-rate}.

\end{theorem}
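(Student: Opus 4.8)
The plan is to reduce Theorem \ref{thorem FB} to the abstract fixed-point results already established, namely Theorem \ref{main-theorem 1} and Theorem \ref{main-theorem-rate}, by recognizing the forward-backward operator $T := J_{\rho A}(I-\rho B)$ as an averaged operator and by folding the two error sequences $\{e^{1,k}\}$ and $\{e^{2,k}\}$ into a single admissible error term. Two preliminary facts drive everything. First, I would record the standard characterization $Fix(T)=zer(A+B)$: indeed $x=J_{\rho A}(x-\rho Bx)$ is equivalent to $x-\rho Bx\in x+\rho Ax$, i.e.\ $0\in Ax+Bx$, so the hypothesis $\Omega=zer(A+B)\neq\varnothing$ guarantees $Fix(T)\neq\varnothing$. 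Second, I would establish the averagedness of $T$ and then pass to its governing nonexpansive operator.

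For the averagedness, $J_{\rho A}$ is firmly nonexpansive (Proposition 23.8 of \cite{bauschkebook2017}), hence $\tfrac12$-averaged. Since $B$ is $\beta$-inverse strongly monotone, $\beta B$ is firmly nonexpansive, so $I-2\beta B$ is nonexpansive; writing $I-\rho B=\left(1-\tfrac{\rho}{2\beta}\right)I+\tfrac{\rho}{2\beta}(I-2\beta B)$ with $\tfrac{\rho}{2\beta}\in(0,1)$ shows that $I-\rho B$ is $\tfrac{\rho}{2\beta}$-averaged for $\rho\in(0,2\beta)$. Applying the composition rule for averaged operators to $\theta_1=\tfrac12$ and $\theta_2=\tfrac{\rho}{2\beta}$ gives that $T$ is $\theta$-averaged with
\begin{equation}
\theta=\frac{\theta_1+\theta_2-2\theta_1\theta_2}{1-\theta_1\theta_2}=\frac{2\beta}{4\beta-\rho}.
\end{equation}
Thus there is a nonexpansive $S$ with $T=(1-\theta)I+\theta S$ and $Fix(S)=Fix(T)=zer(A+B)$, and $T\mu^k-\mu^k=\theta(S\mu^k-\mu^k)$.

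Next I would rewrite the recursion. Setting $\tilde e^{k}:=J_{\rho A}\!\left(\mu^k-\rho(B\mu^k+e^{1,k})\right)+e^{2,k}-T\mu^k$, the update in (\ref{FB-algorithm1}) becomes $z^{k+1}=\mu^k+\lambda_k\bigl(\theta(S\mu^k-\mu^k)+\tilde e^{k}\bigr)$, which with $\tilde\lambda_k:=\lambda_k\theta$ and $\hat e^{k}:=\tilde e^{k}/\theta$ takes exactly the form $z^{k+1}=\mu^k+\tilde\lambda_k(S\mu^k+\hat e^{k}-\mu^k)$ of the master scheme (\ref{two-step algorithm1}) applied to $S$. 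Using the nonexpansiveness of $J_{\rho A}$ I would bound $\|\tilde e^{k}\|\le\rho\|e^{1,k}\|+\|e^{2,k}\|$, so that $\sum_k\tilde\lambda_k\|\hat e^{k}\|=\sum_k\lambda_k\|\tilde e^{k}\|\le\rho\sum_k\lambda_k\|e^{1,k}\|+\sum_k\lambda_k\|e^{2,k}\|<+\infty$ by the summability hypotheses. The relaxation constraint is met because $\lambda_k\le\lambda'<\tfrac{4\beta-\rho}{2\beta}=1/\theta$ forces $\tilde\lambda_k=\lambda_k\theta<1$, while $\tilde\lambda_k\ge\lambda\theta>0$; together with the inertial condition (b) and the boundedness condition (c) of (I), all hypotheses of Theorem \ref{main-theorem 1} hold for $S$.

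Applying Theorem \ref{main-theorem 1} to $S$ then yields (i) existence of $\lim_k\|z^k-z^*\|$ for $z^*\in Fix(S)=\Omega$, the vanishing $\|S\mu^k-\mu^k\|\to0$, and weak convergence of $\{z^k\}$ to a point of $Fix(S)=zer(A+B)$. Claim (ii) follows since $\|J_{\rho A}(I-\rho B)\mu^k-\mu^k\|=\|T\mu^k-\mu^k\|=\theta\|S\mu^k-\mu^k\|\to0$, and (iii) is the weak limit statement. Finally (iv) is read off from Theorem \ref{main-theorem-rate} applied to $S$ with $\theta=\lambda'$. I expect the only genuinely delicate point to be the bookkeeping: verifying the averagedness constant $\theta=\tfrac{2\beta}{4\beta-\rho}$ so that the admissible range $\lambda'<\tfrac{4\beta-\rho}{2\beta}$ is precisely what makes $\tilde\lambda_k<1$, and confirming that the composite error $\hat e^k$ inherits the summability $\sum_k\tilde\lambda_k\|\hat e^k\|<+\infty$ from the two separate error hypotheses; the rest is a direct transcription into the already-proved abstract theorems.
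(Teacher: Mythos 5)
Your proposal is correct and follows essentially the same route as the paper: identify $T=J_{\rho A}(I-\rho B)$ as $\tfrac{2\beta}{4\beta-\rho}$-averaged with $Fix(T)=zer(A+B)$, pass to the governing nonexpansive operator, absorb both error sequences into a single term bounded by $\rho\|e^{1,k}\|+\|e^{2,k}\|$ via nonexpansiveness of $J_{\rho A}$, and invoke Theorem \ref{main-theorem 1} and Theorem \ref{main-theorem-rate} with the rescaled relaxation $\theta\lambda_k<1$. The only (immaterial) difference is that you derive the averagedness constant and the identity $Fix(T)=zer(A+B)$ from first principles via the composition rule, whereas the paper simply cites Proposition 26.1 of \cite{bauschkebook2017}.
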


\begin{proof}
 According to Proposition 26.1 of \cite{bauschkebook2017} that $J_{\rho A}(I-\rho B)$ is $\tau :=\frac{2\beta}{4\beta-\rho}$-averaged and $Fix(J_{\rho A}(I-\rho B)) = zer (A+B)$.  Then there exists an nonexpansive operator $N$ such that $J_{\rho A}(I-\rho B) = (1-\tau)I + \tau N$ and $Fix(J_{\rho A}(I-\rho B)) = Fix(N)$. Therefore, we have
\begin{equation}
z^{k+1}=\mu^{k}+\tau\lambda _{k}(N\mu^{k}+\frac {1}{\tau}(J_{\rho A}(\mu^k-\rho (B\mu^{k} + e^{1,k}))+ e^{2,k}-J_{\rho A}(\mu^{k}-\rho B\mu^{k}))-\mu^{k})
\end{equation}
Notice that $J_{\rho A}$ is nonexpansive, we obtain that
\begin{align}
& \quad \|\frac {1}{\tau}(J_{\rho A}(\mu^k-\rho (B\mu^{k} + e^{1,k}))+ e^{2,k}-J_{\rho A}(\mu^{k}-\rho B\mu^{k}))\| \nonumber \\
& \leq
 \frac {1}{\tau}\|J_{\rho A}(\mu^k-\rho (B\mu^{k} + e^{1,k}))-J_{\rho A}(\mu^{k}-\rho B\mu^{k})\|+ \frac {1}{\tau}\|e^{2,k}\| \nonumber \\
 & \leq \frac {1}{\tau}\|\mu^k-\rho (B\mu^{k} + e^{1,k})-(I-\rho B)\mu^{k}\|+ \frac {1}{\tau}\|e^{2,k}\| \nonumber \\
 & \leq \frac {\rho}{\tau}\| e^{1,k}\|+ \frac {1}{\tau}\|e^{2,k}\|
\end{align}
It is easy to check that the conditions of Theorem \ref{main-theorem 1} are satisfied, then  we can get the conclusions (i), (ii) and (iii) from Theorem \ref{main-theorem 1} and (iv) from Theorem \ref{main-theorem-rate}.
\end{proof}

Similar to Theorem \ref{thorem FB}, we can obtain the following convergence result from Theorem \ref{main-theorem 2}.

\begin{theorem}\label{thorem FB2}
Let $H$ be a real Hilbert space. Let $A:H\rightarrow 2^H$ is maximally monotone operator and $B:H\rightarrow H$ is a $\beta$-inverse strongly monotone operator.  Suppose that $\Omega :=zer (A+B) \neq \varnothing$. Let the iterative sequences $\{z^{k}\}, \{\mu^{k}\}$ are generated by (\ref{FB-algorithm1}).
Assume that the parameters $\lambda_{k}$ and $\alpha_{k}$ satisfy:  Let $\lambda, \sigma, \delta > 0 $ such that
\begin{equation}
\delta > \frac{\alpha[\alpha (1+\alpha)+\sigma]}{1-\alpha^{2}} \textrm{ and } 0< \lambda \leq \lambda_{k}\leq \overline{\lambda} :=\frac {4\beta-\rho}{2\beta}\frac{\delta-\alpha[\alpha (1+\alpha)+\alpha \delta+\sigma]}{\delta[1+\alpha (1+\alpha)+\alpha \delta+\sigma]};
\end{equation}
and the (a), (c), (d) from conditions (II), $\sum_{k=0}^{+\infty}\|e^{1,k}\|<+\infty$ and $\sum_{k=0}^{+\infty} \|e^{2,k}\|<+\infty$.
Then the following hold:

\noindent \emph{(i)}  $\sum _{k=0}^{\infty}\|z^{k+1}-z^{k}\|^{2} < +\infty$ , moreover, $\lim_{k\rightarrow +\infty}\|z^{k+1}-z^{k}\|=0$.

\noindent \emph{(ii)} $\lim_{k\rightarrow +\infty}\|z^{k}-z^{*}\|$ exists, for any $z^{*}\in \Omega$.

\noindent \emph{(iii)} $\lim_{k\rightarrow +\infty}\|J_{\rho A}(I-\rho B)\mu^{k}-\mu^{k}\|=0$ and $\{z^{k}\}$ converges weakly to a point in $zer(A+B)$.

\noindent \emph{(iv)} $
\min_{1\leq i \leq k} \| T\mu^{i} - \mu^{i} \|^2 \leq \frac{1}{k \lambda (1-\overline{\lambda})} (\|z^{1} - z^{*}\| + \Delta )
$, where  $\Delta$ is the same as Theorem \ref{main-theorem-rate}.
\end{theorem}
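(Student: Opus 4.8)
The plan is to reduce this statement to Theorem \ref{main-theorem 2} in exactly the way Theorem \ref{thorem FB} is reduced to Theorem \ref{main-theorem 1}; the two arguments share the same skeleton and differ only in which base convergence result is invoked. First I would recall from Proposition 26.1 of \cite{bauschkebook2017} that, for $\rho \in (0,2\beta)$, the forward-backward operator $J_{\rho A}(I-\rho B)$ is $\tau$-averaged with $\tau = \frac{2\beta}{4\beta-\rho}$ and satisfies $Fix(J_{\rho A}(I-\rho B)) = zer(A+B)$. Writing $J_{\rho A}(I-\rho B) = (1-\tau)I + \tau N$ for the associated nonexpansive operator $N$ (so that $Fix(N) = zer(A+B)$), I would recast the update in (\ref{FB-algorithm1}) into the inexact inertial KM form
\begin{equation*}
z^{k+1} = \mu^{k} + \tau\lambda_{k}\left(N\mu^{k} + \bar{e}^{k} - \mu^{k}\right),
\end{equation*}
where $\bar{e}^{k} = \frac{1}{\tau}\bigl(J_{\rho A}(\mu^{k}-\rho(B\mu^{k}+e^{1,k})) + e^{2,k} - J_{\rho A}(\mu^{k}-\rho B\mu^{k})\bigr)$ absorbs both error streams.

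Next I would verify that this rewritten iteration fulfils conditions (II) for the operator $N$. The inertial parameters $\{\alpha_{k}\}$ are untouched, so condition (a) holds as assumed, and boundedness (c) is assumed directly. The effective relaxation parameter is now $\tau\lambda_{k}$; since $1/\tau = \frac{4\beta-\rho}{2\beta}$, the hypothesis $\lambda_{k} \le \overline{\lambda} = \frac{1}{\tau}\cdot\frac{\delta-\alpha[\alpha(1+\alpha)+\alpha\delta+\sigma]}{\delta[1+\alpha(1+\alpha)+\alpha\delta+\sigma]}$ gives precisely $\tau\lambda_{k} \le \frac{\delta-\alpha[\alpha(1+\alpha)+\alpha\delta+\sigma]}{\delta[1+\alpha(1+\alpha)+\alpha\delta+\sigma]}$ together with $\tau\lambda_{k} \ge \tau\lambda > 0$, which is exactly the interval demanded by condition (II)(b). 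For condition (d) I would invoke the nonexpansiveness of $J_{\rho A}$ to estimate
\begin{equation*}
\|\bar{e}^{k}\| \le \frac{1}{\tau}\|\mu^{k}-\rho(B\mu^{k}+e^{1,k}) - (\mu^{k}-\rho B\mu^{k})\| + \frac{1}{\tau}\|e^{2,k}\| = \frac{\rho}{\tau}\|e^{1,k}\| + \frac{1}{\tau}\|e^{2,k}\|,
\end{equation*}
so that the assumptions $\sum_{k}\|e^{1,k}\| < +\infty$ and $\sum_{k}\|e^{2,k}\| < +\infty$ force $\sum_{k}\|\bar{e}^{k}\| < +\infty$.

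With conditions (II) in place, Theorem \ref{main-theorem 2} applied to $N$ yields (i) $\sum_{k}\|z^{k+1}-z^{k}\|^{2} < +\infty$ and hence $\|z^{k+1}-z^{k}\| \to 0$, (ii) existence of $\lim_{k}\|z^{k}-z^{*}\|$ for each $z^{*} \in Fix(N) = zer(A+B)$, and (iii) $\|N\mu^{k}-\mu^{k}\| \to 0$ together with weak convergence of $\{z^{k}\}$ to a point of $Fix(N)$. Since $Fix(N) = zer(A+B)$ and $\|J_{\rho A}(I-\rho B)\mu^{k}-\mu^{k}\| = \tau\|N\mu^{k}-\mu^{k}\| \to 0$, the conclusions (i)--(iii) follow, and (iv) is then immediate from Theorem \ref{main-theorem-rate} applied to $N$. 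I do not anticipate a genuine obstacle: the reduction is mechanical, and the only point that demands care is the bookkeeping of the scaling factor $1/\tau = \frac{4\beta-\rho}{2\beta}$, namely confirming that the transformed parameter $\tau\lambda_{k}$ lands exactly in the admissible interval of Theorem \ref{main-theorem 2} and that the bound on $\|\bar{e}^{k}\|$ preserves absolute summability.
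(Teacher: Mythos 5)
Your proposal is correct and takes essentially the same approach as the paper: the paper omits the proof of this theorem, saying only that it follows from Theorem \ref{main-theorem 2} via the same averagedness reduction ($J_{\rho A}(I-\rho B)=(1-\tau)I+\tau N$ with $\tau=\tfrac{2\beta}{4\beta-\rho}$, effective relaxation $\tau\lambda_k$, absorbed error $\bar e^k$) used in the proof of Theorem \ref{thorem FB}, which is exactly the reduction you carry out. Your explicit checks that $\tau\lambda_k$ lands in the interval of condition (II)(b) and that $\sum_{k}\|\bar e^{k}\|<+\infty$ are precisely the bookkeeping the paper leaves to the reader.
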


As applications of Theorem \ref{thorem FB} and Theorem \ref{thorem FB2}, we obtain the following result for solving the convex minimization problem (\ref{convex-minimization-2}).

\begin{corollary}\label{FB  corollary1}

Let $H$ be a real Hilbert space. Let $f:H\rightarrow R$ is convex differentiable with a $\frac{1}{\beta}$-Lipschitz continuous gradient, and $g:H\rightarrow (-\infty,+\infty]$ is a proper closed lower semi-continuous convex function. Suppose that $\Omega :=Argmin (f+g)\neq \varnothing$. Let the iterative sequences $\{z^{k}\}, \{\mu^{k}\}$ are generated by following algorithm:
\begin{equation}\label{FB-1}
\left\{
\begin{aligned}
\mu^{k}&=z^{k}+\alpha_{k}(z^{k}-z^{k-1}), \\
z^{k+1}&=\mu^{k}+\lambda _{k}(prox_{\rho g}(\mu^k - \rho (\nabla f( \mu^{k}) + e^{1,k} )+e^{2,k}-\mu^{k}),
\end{aligned}
\right.
\end{equation}
where $\rho \in (0,2\beta)$.
Assume that the parameters $\lambda_{k}, \alpha_{k}$ and the error sequences $\{e^{1,k}\}$ and $\{e^{2,k}\}$ satisfy the conditions of Theorem \ref{thorem FB} or Theorem \ref{thorem FB2}, respectively.
Then the following hold:

\noindent \emph{(i)} $\lim_{k\rightarrow +\infty}\|z^{k}-z^{*}\|$ exists, for any $z^{*}\in \Omega$.

\noindent \emph{(ii)} $\{z^{k}\}$ converges weakly to a point in $\Omega$.

\noindent \emph{(iii)} $
\min_{1\leq i \leq k} \| T\mu^{i} - \mu^{i} \|^2 \leq \frac{1}{k \lambda (1-\theta)} (\|z^{1} - z^{*}\| + \Delta )
$, where $\theta = \lambda'$ or $\theta = \overline{\lambda}$, $\Delta$ is the same as Theorem \ref{main-theorem-rate}.
\end{corollary}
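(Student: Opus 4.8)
The plan is to recognize algorithm (\ref{FB-1}) as the special instance of the forward-backward scheme (\ref{FB-algorithm1}) obtained by setting $A = \partial g$ and $B = \nabla f$, and then to read off conclusions (i)--(iii) directly from Theorem \ref{thorem FB}, Theorem \ref{thorem FB2} and Theorem \ref{main-theorem-rate}. Under the two parameter regimes the hypotheses of Theorem \ref{thorem FB} (conditions (I)) and Theorem \ref{thorem FB2} (conditions (II)) will be inherited verbatim from the corollary's assumptions, so the only genuine work is to certify that the operator pair $(\partial g, \nabla f)$ satisfies the structural requirements demanded there: $\partial g$ maximally monotone and $\nabla f$ $\beta$-inverse strongly monotone.

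First I would record the facts that make the reduction legitimate. Since $g$ is proper, closed and convex, its subdifferential $\partial g$ is maximally monotone, and $prox_{\rho g} = J_{\rho \partial g}$ as already noted in Section \ref{sec:pre}; this turns the proximal step in (\ref{FB-1}) into the resolvent step of (\ref{FB-algorithm1}). Next, because $f$ is convex and differentiable with $\frac{1}{\beta}$-Lipschitz continuous gradient, the Baillon--Haddad theorem gives that $\nabla f$ is $\beta$-inverse strongly monotone; the constant matches exactly, since a convex function with $L$-Lipschitz gradient has $\frac{1}{L}$-cocoercive gradient and here $L = 1/\beta$. Thus $B := \nabla f$ meets the inverse strong monotonicity hypothesis with the same $\beta$ that appears in the step-size constraint $\rho \in (0, 2\beta)$.

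Finally I would identify the solution sets. By Fermat's rule together with the sum rule for subdifferentials (valid under the stated regularity of $f$ and $g$), a point minimizes $f+g$ if and only if $0 \in \nabla f(x) + \partial g(x)$, so $\Omega = Argmin(f+g) = zer(\nabla f + \partial g) = zer(A+B)$. Consequently the error-perturbed proximal-gradient recursion (\ref{FB-1}) coincides with (\ref{FB-algorithm1}) for this pair $(A,B)$, with the error terms $e^{1,k}, e^{2,k}$ playing identical roles. Applying Theorem \ref{thorem FB} (respectively Theorem \ref{thorem FB2}) then yields existence of $\lim_{k\to\infty}\|z^k - z^*\|$ for every $z^* \in \Omega$ and weak convergence of $\{z^k\}$ to a point of $\Omega$, while Theorem \ref{main-theorem-rate} supplies the stated bound on $\min_{1\le i \le k}\|T\mu^i - \mu^i\|^2$ with $\theta = \lambda'$ or $\theta = \overline{\lambda}$ according to the regime.

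I expect the only substantive point to be the cocoercivity step: everything else is a direct dictionary substitution, but one must invoke Baillon--Haddad to pass from Lipschitz continuity of $\nabla f$ to inverse strong monotonicity, and verify that the cocoercivity constant is precisely $\beta$, so that the admissible step-size interval is exactly the one used in Theorem \ref{thorem FB} and Theorem \ref{thorem FB2}.
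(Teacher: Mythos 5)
Your proof is correct and takes essentially the same route as the paper's own proof: set $A=\partial g$, $B=\nabla f$, note that $\partial g$ is maximally monotone and $\nabla f$ is $\beta$-inverse strongly monotone, and then read off (i)--(iii) from Theorem \ref{thorem FB}, Theorem \ref{thorem FB2} and Theorem \ref{main-theorem-rate}. You are in fact more careful than the paper, which simply asserts the $\beta$-inverse strong monotonicity of $\nabla f$ and never states the identification $Argmin(f+g)=zer(\partial g+\nabla f)$, whereas you justify the former by the Baillon--Haddad theorem (with the matching constant $\beta$) and the latter by Fermat's rule plus the subdifferential sum rule, which holds here because $f$ is everywhere differentiable, hence continuous.
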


\begin{proof}
Since the subdifferential of a proper, convex and lower semi-continuous function is maximally monotone operator and $\nabla f$ is $\beta$-inverse strongly monotone operator. Let $A = \partial g$ and $B = \nabla f$, Then we can get the conclusions (i), (ii) and (iii) from Theorem \ref{thorem FB} and Theorem \ref{thorem FB2}, respectively.
\end{proof}

\begin{remark}
We point out the relationship between the proposed inexact inertial forward-backward splitting algorithms (\ref{FB-algorithm1}) and (\ref{FB-1}) with existing forward-backward splitting algorithms.

(1) Let $\alpha_k =0$, (\ref{FB-algorithm1}) becomes the traditional forward-backward splitting algorithm with errors \cite{Combettes2004Optimization,combettes2005}.

(2) Let $e^{1,k}=e^{2,k} = 0$, (\ref{FB-algorithm1}) recovers the relaxed inertial forward-backward splitting algorithm \cite{Attouch2019AMO}.

\end{remark}

\begin{remark}
Based on Theorem \ref{main-theorem 1}, Theorem \ref{main-theorem 2}  and Theorem \ref{main-theorem-rate}, we can also extend them to other operator splitting algorithms, such as Douglas-Rachford splitting algorithm \cite{combettes2007}, Generalized forward-backward splitting algorithm \cite{Raguet-SIAM-2013}, and Davis-Yin's three-operator splitting algorithm \cite{davis2015}, etc. To save the space of this paper, we don't present these results here. However, we will discuss them in a more general setting and together with an application to convex optimization problems arising in signal and image processing.
\end{remark}

\section{Conclusions}

To incorporate error in the iterative sequence, we proposed an inexact inertial
Krasnoselskii-Mann algorithm (\ref{two-step algorithm1}) for finding fixed points of nonexpansive operators. Compared with the original inertial Krasnoselskii-Mann algorithm (\ref{inertial KM}), the proposed algorithm generated a sequence, which takes into account the presence of perturbations. We proved the convergence of the proposed algorithm and provided a nonasymptotic convergence rate analysis for it.  As applications, we employed the proposed algorithm to solve monotone inclusion problems and obtained several new algorithms including inexact inertial proximal point algorithm (\ref{ppa-algorithm1}) and inexact inertial forward-backward splitting algorithm (\ref{FB-algorithm1}). These algorithms generalized the famous proximal point algorithm and forward-backward splitting algorithm. In the future, we will further report numerical experiment results for solving convex optimization problems to demonstrate the advantage of it.

\section*{Acknowledgement}
This work is supported by the National Natural Science Foundation of China (11771198,11661056,11401293), the China Postdoctoral Science Foundation (2015M571989)
and the Jiangxi Province Postdoctoral Science Foundation (2015KY51).


\end{document}